\theoremstyle{plain} \numberwithin{equation}{section}
\newtheorem{theo}{Theorem}[section]
\newtheorem{coro}[theo]{Corollary}
\newtheorem{prop}[theo]{Proposition}
\newtheorem{lemm}[theo]{Lemma}
\theoremstyle{definition}
\newtheorem*{defi}{Definition}
\newtheorem{exam}[theo]{Example}
\newtheorem*{rema}{Remark}
\def\Z{\mathbb Z}
\def\R{\mathbb R}
\def\bB{\bar B}
\def\G{\Gamma}
\def\EA{\mathcal E_A}
\def\EB{\mathcal E_B}
\def\tEA{\bar{\mathcal E}_A}
\def\tEB{\bar{\mathcal E}_B}
\def\bx{\overline x}
\def\f{\tilde f}
\def\MH{\mathcal H}
\def\ML{\mathcal L}
\def\MC{\mathcal A}
\def\MD{\mathcal B}
\def\MS{\mathcal S}
\def\T{\mathfrak B}
\def\bA{\bar A}
\def\vf{\varphi}
\def\tC{\tilde C}
\DeclareMathOperator{\rank}{rank}
\DeclareMathOperator{\GL}{GL}
\begin{document}
\title{Classification of real Bott manifolds}
\author[M. Masuda]{Mikiya Masuda}
\address{Department of Mathematics, Osaka City
University, Sumiyoshi-ku, Osaka 558-8585, Japan.}
\email{masuda@sci.osaka-cu.ac.jp}

\date{\today}
\thanks{The author was partially supported by Grant-in-Aid for
Scientific Research 19204007}
\subjclass[2000]{Primary 57R91; Secondary 53C25, 14M25}
\keywords{real toric manifold, real Bott tower, real Bott manifold, flat riemannian manifold.}

\begin{abstract}
%A real Bott tower is a sequence of $\R P^1$ bundles starting 
A real Bott manifold is the total space of a sequence of $\R P^1$ bundles starting 
with a point, where each $\R P^1$ bundle is the projectivization of a Whitney sum 
of two real line bundles. 
A real Bott manifold is a real toric manifold which admits a flat riemannian metric. 
An upper triangular $(0,1)$ matrix with zero diagonal 
entries uniquely determines such a sequence of $\R P^1$ bundles but 
different matrices may produce diffeomorphic real Bott manifolds.  
In this paper we determine when two such matrices produce diffeomorphic 
real Bott manifolds.  The argument also proves that any graded ring isomorphism 
between the cohomology rings of real Bott manifolds with $\Z/2$ coefficients 
is induced by an affine diffeomorphism between the real Bott manifolds.  In particular, 
this implies the main theorem of \cite{ka-ma08} which asserts that  
two real Bott manifolds are diffeomorphic if and only if their cohomology rings 
with $\Z/2$ coefficients are isomorphic as graded rings. 
We also prove that the decomposition of a real Bott manifold into a product of 
indecomposable real Bott manifolds is unique up to permutations of the 
indecomposable factors. 
\end{abstract}

\maketitle 

%\vskip .3cm

\section{Introduction}

A {\it real Bott tower} of height $n$, which is a real analogue of a Bott tower 
introduced in \cite{gr-ka94}, is a sequence of $\R P^1$ bundles 
\begin{equation*} \label{tower}
M_n\stackrel{\R P^1}\longrightarrow M_{n-1}\stackrel{\R P^1}\longrightarrow 
\cdots\stackrel{\R P^1}\longrightarrow M_1
\stackrel{\R P^1}\longrightarrow M_0=\{\textrm{a point}\}
\end{equation*}
such that $M_j\to M_{j-1}$ for $j=1,\dots,n$ is the 
projective bundle of the Whitney sum of a real line bundle $L_{j-1}$ and the trivial 
real line bundle over $M_{j-1}$, and we call $M_n$ a {\it real Bott manifold}. 
A real Bott manifold 
naturally supports an action of an elementary abelian 2-group 
and provides an example of a real toric manifold which admits 
a flat riemannian metric invariant under the action. 
Conversely, it is shown in \cite{ka-ma08} 
that a real toric manifold which admits a flat riemannian 
metric invariant under an action of an elementary abelian 2-group 
is a real Bott manifold. 

Real line bundles are classified by their first Stiefel-Whitney classes as is 
well-known and $H^1(M_{j-1};\Z/2)$, where $\Z/2=\{0,1\}$, is isomorphic to 
$(\Z/2)^{j-1}$ through a canonical basis, so the line bundle $L_{j-1}$ is 
determined by a vector $A_j$ 
in $(\Z/2)^{j-1}$.  We regard $A_j$ as a column vector in $(\Z/2)^n$ 
by adding zero's and form an $n\times n$ matrix $A$ by putting $A_j$ as the $j$-th column. 
%The resulting matrix $A$ is upper triangular with zero diagonal entries. 
This gives a bijective correspondence between the set of real Bott towers of height $n$ 
and the set $\T(n)$ of $n\times n$ upper triangular $(0,1)$ matrices with zero 
diagonal entries.  Because of this reason, we may denote the real Bott manifold $M_n$ by $M(A)$.  

Although $M(A)$ is determined by the matrix $A$, it happens that 
two different matrices in $\T(n)$ produce (affinely) diffeomorphic real Bott manifolds. 
In this paper we introduce three operations on $\T(n)$ 
and say that two elements in $\T(n)$ are {\it Bott equivalent} if one is 
transformed to the other through a sequence of the three operations. 
Our first main result is the following. 

\begin{theo} \label{main}
The following are equivalent for $A,B$ in $\T(n)$: 
\begin{enumerate}
\item[(1)] $A$ and $B$ are Bott equivalent.
\item[(2)] $M(A)$ and $M(B)$ are affinely diffeomorphic.
\item[(3)] $H^*(M(A);\Z/2)$ and $H^*(M(B);\Z/2)$ are isomorphic as graded 
rings. 
\end{enumerate}
Moreover, any graded ring isomorphism from $H^*(M(A);\Z/2)$ to 
\linebreak
$H^*(M(B);\Z/2))$ is induced by an affine diffeomorphism from 
$M(B)$ to $M(A)$. 
\end{theo}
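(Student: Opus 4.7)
The plan is to prove the equivalences cyclically as $(2)\Rightarrow(3)\Rightarrow(1)\Rightarrow(2)$, arranging the argument for the last arrow so that it actually yields the stronger ``moreover'' statement that every graded ring isomorphism is induced by an affine diffeomorphism. The implication $(2)\Rightarrow(3)$ is formal: a diffeomorphism induces a graded ring isomorphism on $\Z/2$-cohomology. For $(1)\Rightarrow(2)$ I would treat each of the three Bott-equivalence operations on $\T(n)$ separately and write down an explicit affine diffeomorphism $M(A)\to M(A')$ realizing it. This is a bookkeeping task once one notes that $M(A)$ arises as a quotient of the torus $(S^1)^n$ (or of $\R^n$) by a $(\Z/2)^n$-action whose generators are read off from the columns of $A$; the three operations correspond to changes of generators of this action, to conjugation by an affine involution of the torus, and to a swap of fibre/base coordinates at one stage of the tower.

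The heart of the proof is $(3)\Rightarrow(1)$ together with the ``moreover'' clause. Recall that
\[
H^*(M(A);\Z/2)\cong \Z/2[x_1,\dots,x_n]/\bigl(x_j^2+x_j L_j(A)\bigr)_{j=1}^n,
\]
where $L_j(A)=\sum_{i<j}a_{ij}x_i$ is the linear form whose coefficients form the $j$-th column of $A$. A graded ring isomorphism $\vf:H^*(M(A);\Z/2)\to H^*(M(B);\Z/2)$ is therefore determined by its restriction $\vf^1$ to $H^1$, which is an element of $\GL_n(\Z/2)$ sending each relation of $M(A)$ into the ideal of relations defining $M(B)$. My plan is to show that, after composing $\vf^1$ with the degree-one maps induced by affine diffeomorphisms coming from Bott-equivalence operations applied to $A$ and $B$, one can normalize $A$ and $B$ to a common form. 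This simultaneously exhibits the Bott equivalence and produces an affine diffeomorphism realising $\vf$.

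The induction is on $n$. The base case $n=1$ is trivial. For the inductive step I would identify, intrinsically inside $H^*(M(A);\Z/2)$, the set of degree-one classes that can serve as the ``last generator'' $x_n$ of some real Bott tower structure on $M(A)$, characterised by a quadratic relation of the form $y^2+yL(y)=0$ with $L$ a linear form not involving $y$, plus extra conditions that select these classes up to the allowed ambiguity. Using $\vf^1$ one transports such a choice to $H^*(M(B);\Z/2)$; after a single Bott-equivalence operation on $B$ one arranges that $\vf^1$ preserves the filtration coming from the tower, so it descends to an isomorphism between the two $(n-1)$-dimensional base Bott manifolds, to which the inductive hypothesis applies. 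The affine diffeomorphism is then assembled from the inductively produced one on the base together with the explicit affine lift corresponding to the Bott-equivalence operation performed in this step.

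The main obstacle is exactly this inductive step: giving a cohomological characterisation of ``last generator'' classes that is fine enough to force $A$ and $B$ to be related by one of the three operations, and coarse enough to accommodate every legitimate $\vf^1$. The combinatorial case analysis required here, together with the verification that every ambiguity in the choice of last generator is realised by an affine diffeomorphism coming from one of the three operations, is where the definition of the operations must be used in full and where the bulk of the technical work will lie; once it is complete, it simultaneously closes the cycle of implications and proves the ``moreover'' statement.
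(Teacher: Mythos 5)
Your overall architecture (prove $(1)\Rightarrow(2)$ by exhibiting an affine diffeomorphism for each operation, note $(2)\Rightarrow(3)$ is formal, and prove $(3)\Rightarrow(1)$ together with the ``moreover'' clause by normalizing an arbitrary cohomology isomorphism against the three operations) is the same as the paper's. But your proposal for the crucial implication $(3)\Rightarrow(1)$ has a genuine gap, and you acknowledge it yourself: the cohomological characterization of ``last generator'' classes, and the case analysis that would make the induction close, are precisely the content you leave undone. As written, the argument is a plan, not a proof.

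Moreover, the route you sketch is not the one the paper takes, and the difference matters. You propose induction on $n$, peeling off a top $\R P^1$-fibration by identifying intrinsically which degree-one classes can be ``$x_n$'' and then descending to the $(n-1)$-dimensional base. The paper instead works globally, with no induction: it defines an \emph{eigen-element} to be a class $\alpha\in H^1$ admitting some $x\neq 0,\alpha$ with $x^2=\alpha x$, proves (Lemma~\ref{EAa}) that the eigen-elements are exactly the $\alpha_j=\sum_i A^i_j x_i$ and that the eigen-space $\mathcal{E}_A(\alpha)$ is spanned by $\alpha$ together with the $x_i$ having $\alpha_i=\alpha$, and then observes that any graded ring isomorphism must permute eigen-elements and carry eigen-spaces to eigen-spaces. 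This lets one use (Op1) to match the eigen-elements $\alpha_j\leftrightarrow\beta_j$, (Op3) to normalize the induced map on each reduced eigen-space to the identity, and (Op2) to kill the residual shift $x_j\mapsto y_j+\beta_j$, forcing $A=B$ and exhibiting the given isomorphism as the one induced by the composite affine diffeomorphism. Your attempted invariant (``classes $y$ with $y^2=yL$'') picks out exactly the eigen-spaces, so you have rediscovered half of the paper's tool, but using it to single out a privileged ``last'' generator and then induct is harder than the paper's direct normalization: it requires you to prove that a cohomologically chosen class really is the top class of \emph{some} Bott tower structure, that the resulting quotient ring is again a Bott ring, and that the ambiguities in the choice are absorbed by the operations — none of which you carry out. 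Until that is done, $(3)\Rightarrow(1)$ and the ``moreover'' clause are not established.
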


In particular, we obtain the following main theorem of \cite{ka-ma08}. 

\begin{coro}[\cite{ka-ma08}] \label{maincoro}
Two real Bott manifolds are diffeomorphic if and only if their cohomology 
rings with $\Z/2$ coefficients are isomorphic as graded rings. 
\end{coro}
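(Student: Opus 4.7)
The plan is to derive Corollary~\ref{maincoro} directly from Theorem~\ref{main}; no new argument is required beyond extracting the relevant implications from the equivalence chain $(1)\Leftrightarrow(2)\Leftrightarrow(3)$, together with the elementary observation that affine diffeomorphisms are in particular diffeomorphisms.

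For the forward direction, I note that if two real Bott manifolds $M$ and $M'$ are diffeomorphic, then they are homotopy equivalent, and any homotopy equivalence induces a graded ring isomorphism on cohomology with any coefficients. This direction uses only standard algebraic topology and does not invoke Theorem~\ref{main} at all.

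For the converse, suppose $H^*(M;\Z/2)$ and $H^*(M';\Z/2)$ are isomorphic as graded rings. Since $M$ and $M'$ are closed manifolds, their top nonzero $\Z/2$-cohomology degree equals their dimension, so $M$ and $M'$ have the same dimension, say $n$. By the bijective correspondence between real Bott towers of height $n$ and elements of $\T(n)$ recorded in the introduction, one can write $M = M(A)$ and $M' = M(B)$ for some $A,B \in \T(n)$. The implication $(3)\Rightarrow(2)$ of Theorem~\ref{main} then yields that $M(A)$ and $M(B)$ are affinely diffeomorphic, hence in particular diffeomorphic, which completes the argument.

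There is essentially no obstacle at the level of the corollary itself: all the real content has been packaged into Theorem~\ref{main}, whose proof will constitute the main body of the paper. The role of the corollary is simply to display the cohomological rigidity phenomenon for real Bott manifolds and to recover the main theorem of \cite{ka-ma08} as a clean formal consequence of the finer classification in Theorem~\ref{main}.
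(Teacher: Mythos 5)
Your proposal is correct and matches the paper's intent exactly: the corollary is stated as an immediate consequence of Theorem~\ref{main} (``In particular, we obtain\ldots''), with the forward direction being standard and the converse following from $(3)\Rightarrow(2)$; your additional care in noting that an isomorphism of $\Z/2$-cohomology rings forces equal dimensions (so that both manifolds arise from matrices in the same $\T(n)$) fills in a small step the paper leaves implicit.
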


It is asked in \cite{ka-ma08} whether Corollary~\ref{maincoro} holds for any 
real toric manifolds but a counterexample is given in \cite{masu08}.

We say that a real Bott manifold is \emph{indecomposable} if it is not 
diffeomorphic to a product of more than one real Bott manifolds.  
Using Corollary~\ref{maincoro} together with an idea used to prove 
Theorem~\ref{main}, we are able to prove our second main result. 

\begin{theo} \label{main1}
The decomposition of a real Bott manifold into a product of indecomposable 
real Bott manifolds is unique up to permutations of the indecomposable 
factors.  
\end{theo}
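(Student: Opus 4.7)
The plan is to reduce to a statement about cohomology rings via Corollary~\ref{maincoro}, and then run a Krull--Schmidt style argument using the rigidity provided by Theorem~\ref{main}. Suppose $M(A)$ admits two decompositions into indecomposables
\[
M(A) \;\cong\; P_1\times\cdots\times P_r \;\cong\; Q_1\times\cdots\times Q_s.
\]
Applying Corollary~\ref{maincoro} yields a graded ring isomorphism
\[
\bigotimes_{i=1}^{r} H^*(P_i;\Z/2) \;\cong\; \bigotimes_{j=1}^{s} H^*(Q_j;\Z/2).
\]
It therefore suffices to show that such a tensor decomposition into indecomposable factors is unique up to permutation and graded isomorphism of the factors, because Corollary~\ref{maincoro} then upgrades the factor isomorphisms to diffeomorphisms $P_i\cong Q_{\sigma(i)}$.

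To establish this uniqueness I would exploit the analysis underlying Theorem~\ref{main}. That theorem realises any isomorphism $H^*(M(A);\Z/2)\cong H^*(M(B);\Z/2)$ by an affine diffeomorphism, and in particular gives a very rigid description of how a distinguished basis $x_1,\dots,x_n$ of $H^1$ (the one coming from the tower structure and encoded in $A\in\T(n)$) can be transported to another such basis: only by a composition of the three Bott equivalence operations. The first key step is to verify that a product $M(A')\times M(A'')$ corresponds precisely to the block-diagonal matrix $\mathrm{diag}(A',A'')\in\T(n)$, so that decompositions of $M(A)$ into real Bott factors correspond to block-diagonal presentations of $A$ up to Bott equivalence.

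The central step is then to associate to $A$ a canonical partition of $\{1,\dots,n\}$ intrinsic to the ring $H^*(M(A);\Z/2)$. I would define this partition by declaring $i\sim j$ whenever the quadratic relation $x_j^2 = (\sum_k A_{kj}x_k)x_j$ or $x_i^2 = (\sum_k A_{ki}x_i)x_i$ forces a nontrivial interaction between $x_i$ and $x_j$, and then taking the transitive closure together with the closure under all three Bott operations. The blocks of this partition will, by construction, be stable under Bott equivalence; any block-diagonal form of $A$ must refine this partition; and the finest refinement stable under Bott equivalence gives precisely the indecomposable factors. Hence two indecomposable decompositions of $M(A)$ both realise this canonical partition, producing the same unordered multiset of blocks and therefore the same multiset of indecomposable factors up to diffeomorphism.

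The main obstacle will be the canonicity of the partition, that is, showing that the three Bott equivalence operations cannot merge two blocks of an already block-diagonal matrix into a single indecomposable one, and conversely that two indices lying in the same indecomposable block cannot be separated by any sequence of these operations. This is essentially a rigidity statement for the interaction of Bott equivalence with block decompositions, and I expect it to be proved by examining each of the three operations in turn, checking on the one hand that each preserves any given block structure (up to permutation of blocks), and on the other hand that within a single indecomposable block the operations act transitively enough to witness the indecomposability at the ring-theoretic level.
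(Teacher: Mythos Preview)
Your reduction to cohomology via Corollary~\ref{maincoro} is correct and is exactly how the paper begins. The gap is in the ``canonical partition'' step: the three Bott operations do \emph{not} preserve block--diagonal structure at the matrix level, so the plan of ``examining each of the three operations in turn, checking that each preserves any given block structure'' fails.

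Here is a concrete obstruction. Take
\[
A=\begin{pmatrix}0&1&0&0\\0&0&0&0\\0&0&0&1\\0&0&0&0\end{pmatrix}\in\T(4),
\]
the block--diagonal matrix for a product of two Klein bottles, with blocks $\{1,2\}\sqcup\{3,4\}$. The columns $A_1$ and $A_3$ are both zero, so $I=\{1,3\}$ is admissible for (Op3). Applying $\Phi^I_C$ with $C=\left(\begin{smallmatrix}1&1\\0&1\end{smallmatrix}\right)$ replaces row~$1$ by $A^1+A^3$ and yields an upper triangular matrix with a nonzero $(1,4)$ entry; the block structure $\{1,2\}\sqcup\{3,4\}$ is destroyed. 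More generally, every block contains at least one index whose column is zero (the minimal index of the block), and (Op3) may freely mix those rows across blocks. If you try to ``close'' the partition under such moves, you will merge genuinely independent factors; if you don't, the partition is not preserved. Either way the argument collapses. The deeper point is that a partition of $\{1,\dots,n\}$ is an invariant of a chosen presentation, not of the ring $H^*(M(A);\Z/2)$ itself, so it cannot serve as the canonical object you are looking for.

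The paper replaces your partition by a genuinely intrinsic invariant. It sets
\[
S(\MH)=\{x\in\MH^1\setminus\{0\}\ :\ \exists\,\bx\in\MH^1\setminus\{0,x\}\text{ with }x\bx=0\}
\]
and proves (Lemma~\ref{YMA}) that in \emph{any} tensor splitting $\MH=\bigotimes_i\MH_i$ one has $S(\MH)=\coprod_i S(\MH_i)$ as a disjoint union of subsets of $\MH^1$. This is the key lemma your sketch is missing: it pins down the subring $\MH_S$ generated by $S(\MH)$ absolutely (not just up to isomorphism), separates off the $S^1$ factors as an exterior algebra $\Lambda(V)$ whose only invariant is $\dim V$ (Corollary~\ref{semis}), and then allows an induction on rank for the ``semisimple'' part via quotienting by Klein pairs (Lemma~\ref{klein}, Proposition~\ref{simpl}). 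None of this is visible from tracking Bott operations on a fixed index set.
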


In particular, we have

\begin{coro}[Cancellation Property]  \label{main1coro}
Let $M$ and $M'$ be real Bott manifolds. If $S^1\times M$ and 
$S^1\times M'$ are diffeomorphic, then $M$ and $M'$ are diffeomorphic. 
\end{coro}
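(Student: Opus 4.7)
The plan is to derive Corollary \ref{main1coro} as an essentially formal consequence of Theorem \ref{main1}. The key preliminary observation is that the circle $S^1$ is itself a real Bott manifold: it corresponds to the unique $1\times 1$ upper triangular zero matrix in $\T(1)$, giving $M_1=\R P(\underline{\R}\oplus\underline{\R})=\R P^1=S^1$ over a point. Moreover $S^1$ is indecomposable as a real Bott manifold, since any real Bott manifold $M(A)$ with $A\in\T(n)$ has dimension $n$, so a $1$-dimensional real Bott manifold admits no nontrivial product decomposition inside the class of real Bott manifolds.

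Given this, I would proceed as follows. Decompose $M$ and $M'$ into products of indecomposable real Bott manifolds, say $M\cong N_1\times\cdots\times N_k$ and $M'\cong N'_1\times\cdots\times N'_\ell$. This is always possible by iterating the definition of indecomposability until no further factoring is available. Then
\[
S^1\times M\cong S^1\times N_1\times\cdots\times N_k,\qquad S^1\times M'\cong S^1\times N'_1\times\cdots\times N'_\ell
\]
are two expressions of diffeomorphic real Bott manifolds as products of indecomposable real Bott manifolds. By Theorem \ref{main1}, the uniqueness of such a decomposition forces the multisets $\{S^1,N_1,\dots,N_k\}$ and $\{S^1,N'_1,\dots,N'_\ell\}$ to agree up to permutation of their entries (under diffeomorphism). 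Cancelling one instance of $S^1$ from each side yields an equality of multisets $\{N_1,\dots,N_k\}=\{N'_1,\dots,N'_\ell\}$ up to permutation, whence $M\cong N_1\times\cdots\times N_k$ is diffeomorphic to $N'_1\times\cdots\times N'_\ell\cong M'$.

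The only nontrivial point to verify carefully is the indecomposability of $S^1$, and this is immediate from the dimension count above. Everything else is a direct application of Theorem \ref{main1}, so this corollary really is a clean formal consequence of the unique factorization statement, in precisely the same spirit as the classical cancellation property for abelian groups derived from the Krull--Schmidt theorem.
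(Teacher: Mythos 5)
Your proposal is correct and takes the same route as the paper, which presents Corollary~\ref{main1coro} as an immediate consequence of Theorem~\ref{main1} (``In particular, we have\ldots'') without spelling out the argument. You have simply made explicit the two implicit points: that $S^1$ is itself a real Bott manifold (the case $A\in\T(1)$, the unique $1\times1$ zero matrix) and that it is indecomposable for dimension reasons, after which cancellation is the standard Krull--Schmidt-type deduction from unique factorization.
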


It would be interesting to ask whether Theorem~\ref{main1} and 
Corollary~\ref{main1coro} hold for any real toric manifolds.  

The author learned from 
Y. Kamishima that Corollary~\ref{main1coro} can also be obtained from 
the method developed in \cite{ka-na08} and \cite{nazr08} and that 
the cancellation property above fails to hold for general compact 
flat riemannian manifolds, see \cite{char65-1}.  

This paper is organized as follows.  In Section~\ref{sect:rbott} we describe 
$M(A)$ and its cohomology rings explicitly in terms of the matrix $A$. 
In Section~\ref{sect:matrix} we introduce the three operations on $\T(n)$. 
To each operation we associate an affine diffeomorphism between 
real Bott manifolds in Section~\ref{sect:affine}, which implies the 
implication (1) $\Rightarrow$ (2) in Theorem~\ref{main}.  The implication 
(2) $\Rightarrow$ (3)  is trivial. In Section~\ref{sect:cohom} we prove 
the latter statement in Theorem~\ref{main}. 
The argument also establishes the implication (3) $\Rightarrow$ (1).  
In the proof we introduce 
a notion of eigen-element and eigen-space in the first cohomology group 
of a real Bott manifold using the multiplicative structure of the cohomology 
ring and they play an important role on the analysis of isomorphisms between 
cohomology rings.  Using this notion, we prove 
Theorem~\ref{main1} in Section~\ref{sect:decom}.  

\section{Real Bott manifolds and their cohomology rings} \label{sect:rbott}

As mentioned in the Introduction, a real Bott manifold $M(A)$ of dimension $n$ 
is associated to a matrix $A\in\T(n)$.  
In this section we give an explicit description of $M(A)$ and 
its cohomology ring. 

We set up some notation. 
Let $S^1$ denote the unit circle consisting 
of complex numbers with unit length. For elements $z\in S^1$ and $a\in 
\Z/2$ we use the following notation 
\[
z(a):=\begin{cases} z \quad&\text{if $a=0$}\\
\bar z\quad&\text{if $a=1$}.
\end{cases}
\]
For a matrix $A$ we denote by $A^i_j$ the 
$(i,j)$ entry of $A$ and by $A^i$ (resp. $A_j$) the $i$-th row (resp. 
$j$-th column) of $A$. 

Now we take $A$ from $\T(n)$ and define 
involutions $a_i$'s on $T^n:=(S^1)^n$ by 
\begin{equation} \label{ai}
a_i(z_1,\dots,z_n):=(z_1,\dots,z_{i-1},-z_i,z_{i+1}(A^i_{i+1}),\dots,
z_n(A^i_n))
\end{equation}
for $i=1,\dots,n$. These involutions $a_i$'s commute with each other and 
generate an elementary abelian 2-group of rank $n$, denoted by $G(A)$. 
The action of $G(A)$ on $T^n$ is free and the orbit space is the desired 
real Bott manifold $M(A)$. 

$M(A)$ is a flat riemannian manifold.  
In fact, Euclidean motions $s_i$'s $(i=1,\dots,n)$ on $\R^n$ defined by 
\[
s_i(u_1,\dots,u_n):=(u_1,\dots,u_{i-1}, u_i+\frac{1}{2}, 
(-1)^{A_{i+1}^i}u_{i+1},\dots, (-1)^{A_{n}^i}u_n)
\]
generate a crystallographic group $\Gamma(A)$, where the subgroup 
generated by $s_1^2,\dots,s_n^2$ consists of all translations by $\Z^n$, 
and the action of $\G(A)$ on $\R^n$ is free and the orbit space 
$\R^n/\G(A)$ agrees with $M(A)$ through an identification $\R/\Z$ with 
$S^1$ via an exponential map $u\to \exp(2\pi\sqrt{-1}u)$. 
$M(A)$ admits an action of an elementary abelian 2-group defined by 
$(u_1,\dots,u_n)\to (\pm u_1,\dots,\pm u_n)$ and this action preserves 
the flat riemannian metric on $M(A)$.

Let $G_k$ $(k=1,\dots,n)$ be a subgroup of $G(A)$ generated by 
$a_1,\dots,a_k$. Needless to say $G_n=G(A)$. 
Let $T^k:=(S^1)^k$ be a product of first $k$-factors in 
$T^n=(S^1)^n$. Then $G_k$ acts on $T^k$ by restricting the action of 
$G_k$ on $T^n$ to $T^k$ and the orbit space $T^k/G_k$ is a real Bott 
manifold of dimension $k$.  Natural projections $T^k\to T^{k-1}$ 
for $k=1,\dots,n$ produce a real Bott tower
\[
M(A)=T^n/G_n\to T^{n-1}/G_{n-1} \to \dots\to T^1/G_1\to \text{\{a point\}}.
\]

The graded ring structure of $H^*(M(A);\Z/2)$ can be described explicitly 
in terms of the matrix $A$.  We shall recall it. 
For a homomorphism $\lambda\colon G(A)\to \Z_2=\{\pm 1\}$ we denote by 
$\R(\lambda)$
the real one-dimensional $G(A)$-module associated with $\lambda$.  
Then the orbit space of $T^n\times \R(\lambda)$ by the 
diagonal action of $G(A)$, denoted by $L(\lambda)$, defines a real line 
bundle over $M(A)$ with the first projection.  Let $\lambda_j\colon G(A)\to 
\Z_2$ $(j=1,\dots,n)$ be a homomorphism sending $a_i$ to $-1$ for $i=j$ 
and $1$ for $i\not=j$, and we set 
$$x_j=w_1(L(\lambda_j))$$
where $w_1$ denotes the first Stiefel-Whitney class. 

\begin{lemm}[see {\cite[Lemma 2.1]{ka-ma08}} for example] \label{cohoA}
As a graded ring 
$$H^*(M(A);\Z/2)=\Z/2[x_1,\dots,x_n]/(x_j^2=x_j\sum_{i=1}^nA^i_jx_i\mid 
j=1,\dots,n).$$
\end{lemm}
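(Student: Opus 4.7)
The plan is to induct on $n$, applying the projective bundle formula (equivalently Leray--Hirsch with $\Z/2$-coefficients) at each stage of the tower
$$M(A)=M_n\to M_{n-1}\to\cdots\to M_0=\{\text{pt}\}.$$
The base case $n=0$ is trivial. For the inductive step, consider the $\R P^1$-bundle $p\colon M_j\to M_{j-1}$, which is the projectivization of $L_{j-1}\oplus\underline{\R}$. The projective bundle formula tells us that $H^*(M_j;\Z/2)$ is a free $H^*(M_{j-1};\Z/2)$-module with basis $\{1,w_1(\gamma_j)\}$, where $\gamma_j$ is the tautological real line bundle on $M_j$, subject to the relation
$$w_1(\gamma_j)^2=w_1(p^*L_{j-1})\,w_1(\gamma_j)$$
(over $\Z/2$ the sign is immaterial).

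First I would establish two identifications. (i) Under the tower projection $M(A)=M_n\to M_j$, the pullback of $\gamma_j$ is the line bundle $L(\lambda_j)$, so $w_1$ of this pullback equals $x_j$. This uses the explicit description of $M_j$ as the orbit space $T^j/G_j$: the last $S^1$-factor, modulo the sign action of $a_j$, parametrizes lines in the fiber of $L_{j-1}\oplus\underline{\R}$, and direct inspection of the free $G(A)$-action on $T^n\times\R(\lambda_j)$ identifies the associated tautological line bundle with $L(\lambda_j)$. (ii) The first Stiefel--Whitney class of $L_{j-1}$, expressed in the basis $x_1,\dots,x_{j-1}$ of $H^1(M_{j-1};\Z/2)$, equals $\sum_{i=1}^{j-1}A^i_jx_i=\sum_{i=1}^nA^i_jx_i$, the last equality since $A$ is upper triangular with zero diagonal. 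This is immediate from the definition of the matrix $A$, whose $j$-th column records the coordinates of $w_1(L_{j-1})$ in this basis.

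Substituting (i) and (ii) into the projective bundle relation gives
$$x_j^2=x_j\sum_{i=1}^nA^i_jx_i$$
in $H^*(M_j;\Z/2)$, which pulls back to the same relation in $H^*(M(A);\Z/2)$. Combining with the inductive presentation of $H^*(M_{j-1};\Z/2)$ and the free-module statement gives the inductive step, and assembling across all $j$ yields the presentation in the lemma.

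The main obstacle is the identification in (i): matching the tautological line bundle $\gamma_j$, defined intrinsically via the fiberwise projective geometry of the $\R P^1$-bundle $M_j\to M_{j-1}$, with the extrinsic line bundle $L(\lambda_j)$ defined via the character $\lambda_j$ of $G(A)$. This amounts to carefully comparing the two quotient constructions of the same line bundle, one as the descended tautological line over $T^j/G_j$ and the other as the Borel construction $T^n\times_{G(A)}\R(\lambda_j)$; once this bookkeeping is done, the rest is routine algebra.
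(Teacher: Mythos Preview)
The paper does not actually prove this lemma: it is stated with a bare citation to \cite[Lemma~2.1]{ka-ma08} and no argument is given in the present text. So there is no ``paper's own proof'' to compare against here.

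That said, your proposal is correct and is exactly the standard argument one expects (and presumably the one in the cited reference). Iterating the Leray--Hirsch/projective bundle formula along the tower is the canonical way to obtain such a presentation, and your relation $w_1(\gamma_j)^2 = p^*w_1(L_{j-1})\,w_1(\gamma_j)$ is the correct specialization of the rank-two projective bundle relation $w_1(\gamma)^2 + p^*w_1(E)\,w_1(\gamma) + p^*w_2(E)=0$ to $E=L_{j-1}\oplus\underline{\R}$. Identification~(ii) is literally the definition of the column $A_j$ given in the Introduction. Identification~(i) is, as you say, the only point requiring care; the cleanest way to see it is to note that $T^j/G_{j-1}\to M_{j-1}$ is the unit circle bundle of $L_{j-1}\oplus\underline{\R}$ (the $a_i$ for $i<j$ act on the $j$-th circle by $z_j\mapsto z_j(A^i_j)$, which is exactly the transition data of $L_{j-1}$), and the residual $\Z/2=\langle a_j\rangle$ acts by the antipodal map $z_j\mapsto -z_j$, so the associated line bundle $T^j\times_{G_j}\R(\lambda_j|_{G_j})$ is the fiberwise tautological bundle. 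Pulling back along $M(A)\to M_j$ then gives $L(\lambda_j)$. Once this is in hand, the inductive step and the final presentation follow immediately.
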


Let $B$ be another element of $\T(n)$. 
Since $M(A)=T^n/G(A)$ and $M(B)=T^n/G(B)$, an affine automorphism 
$\f$ of $T^n$ together with a group isomorphism 
$\phi\colon G(B)\to G(A)$ 
induces an affine diffeomorphism $f\colon M(B)\to M(A)$ 
if $\f$ is $\phi$-equivariant, 
i.e., $\f(gz)=\phi(g)\f(z)$ for $g\in G(B)$ and $z\in T^n$. Since the actions 
of $G(A)$ and $G(B)$ on $T^n$ are free, the isomorphism $\phi$ will be 
uniquely determined by $\f$ if it exists. 
We shall use $b_i$ and $y_j$ for $M(B)$ in place of $a_i$ and $x_j$ for 
$M(A)$. 

\begin{lemm} \label{f*}
If $\phi(b_i)=\prod_{j=1}^na_j^{F^i_j}$ with $F^i_j\in \Z/2$, 
then $f^*(x_j)=\sum_{i=1}^nF^i_jy_i$. 
\end{lemm}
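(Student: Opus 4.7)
The plan is to track how the character defining $L(\lambda_j)$ transforms under pullback by $f$, and then use the additivity of the first Stiefel--Whitney class under tensor products of real line bundles.

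First I would establish a general pullback identification: for any homomorphism $\lambda\colon G(A)\to\Z_2$, the pullback $f^*L(\lambda)$ over $M(B)$ is isomorphic to $L(\lambda\circ\phi)$. Since $\f\colon T^n\to T^n$ is $\phi$-equivariant, the map $(z,v)\mapsto(\f(z),v)$ from $T^n\times\R(\lambda\circ\phi)$ to $T^n\times\R(\lambda)$ descends to the respective quotients and yields a bundle map over $f$ which is an isomorphism on each fibre. This is the one step with real geometric content; everything afterwards is linear algebra modulo $2$.

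Next I would evaluate $\lambda_j\circ\phi$ on a generator $b_i$ of $G(B)$. Using the hypothesis $\phi(b_i)=\prod_{k=1}^n a_k^{F^i_k}$ together with the defining property that $\lambda_j(a_k)=-1$ precisely when $k=j$, I obtain $(\lambda_j\circ\phi)(b_i)=(-1)^{F^i_j}$, which by the analogous characterization of $\mu_i$ says that $\lambda_j\circ\phi=\prod_{i=1}^n\mu_i^{F^i_j}$ in $\mathrm{Hom}(G(B),\Z_2)$.

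Finally I would invoke the standard identities $L(\lambda\cdot\mu)\cong L(\lambda)\otimes L(\mu)$ and $w_1(L\otimes L')=w_1(L)+w_1(L')$ in $\Z/2$ cohomology to conclude
\[
f^*(x_j)=w_1(f^*L(\lambda_j))=w_1(L(\lambda_j\circ\phi))=\sum_{i=1}^n F^i_j\,w_1(L(\mu_i))=\sum_{i=1}^n F^i_j\, y_i.
\]
The main obstacle, such as it is, lies in making the pullback identification $f^*L(\lambda)\cong L(\lambda\circ\phi)$ fully rigorous as an isomorphism of real line bundles; once that is in hand, the remainder is bookkeeping with characters of $G(B)$.
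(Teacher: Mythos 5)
Your proof is correct and takes the same route as the paper: construct the bundle map $L(\lambda\circ\phi)\to L(\lambda)$ from the $\phi$-equivariant map $\f$, compute $(\lambda_j\circ\phi)(b_i)=(-1)^{F^i_j}$, and deduce the formula for $f^*(x_j)$ from the additivity of $w_1$. The paper compresses the character computation and the $w_1$ step into a single sentence; you have merely made explicit the bookkeeping the paper leaves to the reader.
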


\begin{proof} 
A map $T^n\times \R(\lambda\circ\phi)\to T^n\times \R(\lambda)$ 
sending $(z,u)$ to $(\f(z),u)$ induces a bundle map $L(\lambda\circ\phi) 
\to L(\lambda)$ covering $f\colon M(B)\to M(A)$.  
Since $(\lambda_j\circ\phi)(b_i)=F^i_j$, this implies the lemma. 
\end{proof}

\section{Three matrix operations} \label{sect:matrix}

In this section we introduce three operations on matrices 
used in later sections to analyze when 
$M(A)$ and $M(B)$ (resp. $H^*(M(A);\Z/2)$ and $H^*(M(B);\Z/2)$) 
are diffeomorphic (resp. isomorphic) for $A,B\in \T(n)$. 
In the following $A$ will denote an element of $\T(n)$. 

\medskip
\noindent
{\bf 1st operation (Op1).} For a permutation matrix $S$ of size $n$ we define 
\[
\Phi_S(A):=SAS^{-1}.
\]
To be more precise, there is a permutation $\sigma$ on a set $\{1,\dots,n\}$ 
such that $S^i_j=1$ if $i=\sigma(j)$ and $S^i_j=0$ otherwise. We note that 
if we set $B=\Phi_S(A)$, then $SA=BS$ and 
\begin{equation} \label{SA=BA}
A^i_j=(SA)^{\sigma(i)}_j=(BS)^{\sigma(i)}_j=B^{\sigma(i)}_{\sigma(j)}.
\end{equation}
$\Phi_S(A)$ may not be in $\T(n)$ but we will perform the operation 
$\Phi_S$ on $A$ only when $\Phi_S(A)$ stays in $\T(n)$. 

\medskip
\noindent
{\bf 2nd operation (Op2).} For $k\in \{1,\dots,n\}$ we define a square matrix 
$\Phi^k(A)$ of size $n$ by 
\begin{equation} \label{2nd}
\Phi^k(A)_j:=A_j+A^k_jA_k\quad\text{for $j=1,\dots,n$}.
\end{equation}
$\Phi^k(A)$ stays in $\T(n)$ and since the diagonal entries of $A$ 
are all zero and we are working over $\Z/2$, the composition 
$\Phi^k\circ \Phi^k$ is the identity; 
so $\Phi^k$ is bijective on $\T(n)$. 

\medskip
\noindent
{\bf 3rd operation (Op3).} Let $I$ be a subset of $\{1,\dots,n\}$ such that 
$A_i=A_j$ for $i,j\in I$ and $A_i\not=A_j$ for $i\in I$ and 
$j\notin I$. Since the diagonal entries of $A$ are all zero, 
the condition $A_i=A_j$ for $i,j\in I$ implies that $A^i_j=0$ for 
$i,j\in I$. 
Let $C=(C^i_k)_{i,k\in I}$ with $C^i_k\in \Z/2$ be an invertible matrix of 
size $|I|$.  Then we define a square matrix $\Phi^I_C(A)$ of size $n$ by 
\begin{equation} \label{3rd}
\Phi^I_C(A)^i_j:=\begin{cases} \sum_{k\in I}C^i_kA^k_j\quad&\text{$(i\in I)$}\\
A^i_j\quad&\text{$(i\notin I)$}.
\end{cases}
\end{equation}
$\Phi^I_C(A)$ stays in $\T(n)$ and since $C$ is invertible, $\Phi^I_C$ is 
bijective on $\T(n)$. 

\begin{defi}
We say that two elements in $\T(n)$ are {\it Bott equivalent} 
if one is transformed to the other through a sequence of the three operations 
(Op1), (Op2) and (Op3).
\end{defi}

\begin{exam}
$\T(2)$ has two elements and they are not Bott equivalent. 
$\T(3)$ has $2^3=8$ elements and they are classified into four Bott 
equivalence classes as follows: 
\begin{enumerate}
\item[(1)] The zero matrix of size $3$
\item[(2)] ${\tiny  
\begin{pmatrix} 
0 & 1 & 0\\
0 & 0 & 0\\
0 & 0 & 0\end{pmatrix}\quad  
\begin{pmatrix} 
0 & 0 & 1\\
0 & 0 & 0\\
0 & 0 & 0\end{pmatrix}\quad  
\begin{pmatrix} 
0 & 0 & 0\\
0 & 0 & 1\\
0 & 0 & 0\end{pmatrix}\quad  
\begin{pmatrix} 
0 & 0 & 1\\
0 & 0 & 1\\
0 & 0 & 0\end{pmatrix}
}$
\item[(3)] 
${\tiny 
\begin{pmatrix} 
0 & 1 & 1\\
0 & 0 & 0\\
0 & 0 & 0\end{pmatrix}
}$
\item[(4)] 
${\tiny  
\begin{pmatrix} 
0 & 1 & 0\\
0 & 0 & 1\\
0 & 0 & 0\end{pmatrix}\quad 
\begin{pmatrix} 
0 & 1 & 1\\
0 & 0 & 1\\
0 & 0 & 0\end{pmatrix}
}$
\end{enumerate}
$\T(4)$ has $2^6=64$ elements and one can check that it has twelve Bott 
equivalence classes, see \cite{ka-ma08} and \cite{nazr08}. 
Furthermore, $\T(5)$ has $2^{10}=1024$ elements and one can check that 
it has $54$ Bott equivalence classes.  
The author learned from Admi Nazra that he classified real Bott manifolds 
of dimension 5 up to diffeomorhism 
from a different viewpoint (see \cite{ka-na08}, \cite{nazr08}) and found 
the 54 Bott equivalence classes in $\T(5)$.  
The author does not know the number of Bott equivalence classes 
in $\T(n)$ for $n\ge 6$ although it is in between $2^{(n-2)(n-3)/2}$ and 
$2^{n(n-1)/2}$ (see Example~\ref{Deltan} below). 
\end{exam}

\begin{exam}
Let $\T_k(n)$ $(1\le k\le n-1)$ be a subset of $\T(n)$ such that 
$A\in \T(n)$ is in $\T_k(n)$ if and only if $A$ has exactly 
$k$ non-zero columns. There is only one Bott equivalence class in 
$\T_1(n)$ and the corresponding real Bott manifold is the product of 
a Klein bottle and $(\R P^1)^{n-2}$. 
$\T_2(3)$ has two Bott equivalence classes represented by 
\[
{\tiny  
\begin{pmatrix} 
0 & 1 & 1\\
0 & 0 & 0\\
0 & 0 & 0\end{pmatrix}}\quad \text{}\quad 
{\tiny 
\begin{pmatrix} 
0 & 1 & 0\\
0 & 0 & 1\\
0 & 0 & 0\end{pmatrix}
}
\]
But $\T_2(n)$ for $n\ge 4$ has four Bott equivalence classes;  
two of them are represented by $n\times n$ matrices with the above $3\times 3$ 
matrices at the right-low corner and $0$ in others, and the other two 
are represented by $n\times n$ matrices with the following $4\times 4$ 
matrices at the right-low corner and $0$ in others
\[
{\tiny  
\begin{pmatrix} 
0 & 0 & 0 & 1\\
0 & 0 & 1 & 0\\
0 & 0 & 0 & 1\\
0 & 0 & 0 & 0\end{pmatrix}}\quad \text{}\quad 
{\tiny 
\begin{pmatrix} 
0 & 0 & 0 & 1\\
0 & 0 & 1 & 0\\
0 & 0 & 0 & 0\\
0 & 0 & 0 & 0\end{pmatrix}
}
\]
\end{exam}

\begin{exam} \label{Deltan}
Let $\Delta(n)$ be a subset of $\T(n)$ such that 
$A\in \T(n)$ is in $\Delta(n)$ if and only if 
$A^{i}_{i+1}=1$ for $i=1,\dots,n-1$. 
Only the operation (Op2) is available on $\Delta(n)$ and 
one can change $(i,i+2)$ entry into $0$ for $i=1,\dots,n-2$ using 
the operation, so that $A$ is Bott equivalent to a matrix $\bA$ of 
this form 
{\footnotesize
\begin{equation*} \label{reduced}
\bA=\begin{pmatrix} 
0&1&0&\bA^1_4&\bA^1_5&\dots&\bA^1_{n-1}&\bA^1_n\\
0&0&1&0&\bA^2_5&\dots&\bA^2_{n-1}&\bA^2_n\\
\vdots&\vdots& \ddots &\ddots &\ddots&\ddots &\vdots & \vdots\\
%\vdots&\vdots&  &\ddots &\ddots& \ddots&\vdots & \vdots\\
0& 0& \dots&0&1 &0 & \bA^{n-4}_{n-1} & \bA^{n-4}_n\\
0& 0& \dots&0&0  &1 & 0 & \bA^{n-3}_n\\
0& 0& \dots&0&0  &0 & 1 & 0\\
0& 0& \dots&0&0  &0 & 0 & 1\\
0& 0& \dots&0&0  &0 & 0 & 0
\end{pmatrix}
\end{equation*}}
$\bA$ is uniquely determined by $A$ and two elements $A,B\in \Delta(n)$ 
are Bott equivalent if and only if $\bA=\bB$. Therefore there are exactly 
$2^{(n-2)(n-3)/2}$ Bott equivalent classes in $\Delta(n)$ 
for $n\ge 2$. 
\end{exam}

\begin{rema} 
As remarked above $\Phi_S(A)$ may not stay in $\T(n)$.  This awkwardness 
can be resolved if we consider the union of $\Phi_S(\T(n))$ over 
all permutation matrices $S$. 
The three operations above preserve the union and are bijective on it. 
This union is a natural object.  In fact, 
it is shown in \cite[Lemma 3.3]{ma-pa08} that a square matrix $A$ of size $n$ 
with entries in $\Z/2$ lies in the union if and only if all principal 
minors of $A+E$ (even 
the determinant of $A+E$ itself) are one in $\Z/2$ where $E$ denotes the 
identity matrix of size $n$. 
\end{rema}

\section{Affine diffeomorphisms} \label{sect:affine}

In this section we associate an affine diffeomorphism between real Bott 
manifolds to each operation introduced in the previous section, and 
prove the implication $(1)\Rightarrow (2)$ 
in Theorem~\ref{main}, that is 

\begin{prop}
If $A, B\in \T(n)$ are Bott equivalent, then the associated real 
Bott manifolds $M(A)$ and $M(B)$ are affinely diffeomorphic. 
\end{prop}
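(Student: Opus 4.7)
The plan is to reduce to the three generating operations (Op1), (Op2), (Op3) separately: for each, I will exhibit an affine automorphism $\f\colon T^n\to T^n$ together with a compatible group isomorphism $\phi\colon G(B)\to G(A)$ satisfying the intertwining relation $\f(bz)=\phi(b)\f(z)$ for all $b\in G(B)$ and $z\in T^n$, as set up just before Lemma~\ref{f*}. Any such pair descends to an affine diffeomorphism $f\colon M(B)\to M(A)$.

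For (Op1), where $B=SAS^{-1}$ for the permutation matrix of some $\sigma$, the natural choice is the coordinate permutation $\f(z_1,\dots,z_n):=(z_{\sigma(1)},\dots,z_{\sigma(n)})$ paired with $\phi(b_i):=a_{\sigma(i)}$. Equivariance is immediate from the identity $B^{\sigma(i)}_{\sigma(j)}=A^i_j$ recorded in equation~(\ref{SA=BA}) inserted into the defining formula~(\ref{ai}) for $a_i$. For (Op2) with parameter $k$, the column relation $B_j=A_j+A^k_jA_k$ suggests the shear given on the universal cover $\R^n$ by $u_j\mapsto u_j+A^k_j u_k$ for $j\neq k$ (identity on $u_k$); since $A$ is strictly upper triangular this shear is unipotent integer, hence an element of $\GL_n(\Z)$ that descends to $T^n$. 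One then takes $\phi(b_i):=a_i$ for $i\neq k$ and defines $\phi(b_k)$ as a product of $a_k$ with those $a_j$ ($A^k_j=1$) chosen to absorb the shear's effect on the $k$-th coordinate; the precise form is forced by a coordinate-by-coordinate match of the defining formulas for $a_i$ and $b_i$.

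For (Op3) with subset $I$ and invertible $C\in\GL_{|I|}(\Z/2)$, the hypothesis that $A_i=A_j$ for $i,j\in I$ forces $A^i_j=0$ for all $i,j\in I$ and means the generators $\{a_i\}_{i\in I}$ commute and impose identical conjugation effects on each coordinate $z_j$ with $j\notin I$. I choose a lift $\tilde C\in\GL_{|I|}(\Z)$ of $C$ (possible because $\GL_n(\Z)\twoheadrightarrow\GL_n(\Z/2)$, via the factorisation of $C$ into elementary and permutation matrices), let $\f$ be $\tilde C$ acting on the coordinates in $I$ and the identity elsewhere, and set $\phi(b_i):=\prod_{k\in I}a_k^{C^i_k}$ for $i\in I$ while $\phi(b_i):=a_i$ for $i\notin I$. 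Outside $I$, the equivariance check reduces to the definition $B^i_j=\sum_{k\in I}C^i_k A^k_j$ from~(\ref{3rd}) together with the observation that the $a_k$'s for $k\in I$ contribute the same conjugation on $z_j$.

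The main obstacle is the equivariance verification for (Op3) on the coordinates inside $I$: one must track how the integer lift $\tilde C$ interacts with the order-two sign-flip parts of the $a_k$'s (lifts of half-translations on the universal cover) and confirm that $\prod_k a_k^{C^i_k}$ indeed realises the prescribed action of $b_i$ on $z_i$. The common-column hypothesis $A_i=A_j$ for $i,j\in I$ is what makes the computation consistent and independent of the choice of lift $\tilde C$. The case (Op2) is technically easier but still requires careful accounting of the twist on the $k$-th coordinate, and (Op1) is essentially a relabelling.
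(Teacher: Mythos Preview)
Your treatment of (Op1) and (Op3) follows the paper's line, up to an index slip in (Op1): with the paper's convention $A^i_j=B^{\sigma(i)}_{\sigma(j)}$ one needs $\phi(b_{\sigma(i)})=a_i$, i.e.\ $\phi(b_i)=a_{\sigma^{-1}(i)}$, not $\phi(b_i)=a_{\sigma(i)}$; your version works only when $\sigma$ is an involution. The serious gap is (Op2). The integral shear $u_j\mapsto u_j+A^k_j u_k$ does \emph{not} conjugate $\Gamma(B)$ into $\Gamma(A)$. Already for $n=3$, $k=2$, $A=\left(\begin{smallmatrix}0&1&0\\0&0&1\\0&0&0\end{smallmatrix}\right)$ one computes $F\,s_1^B\,F^{-1}(u)=(u_1+\tfrac12,\,-u_2,\,-u_3)$, and no element of $\Gamma(A)$ has this form: every element with linear part $\mathrm{diag}(1,-1,-1)$ has $u_2$-translation $\equiv\tfrac12\pmod\Z$. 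More generally, whenever there exist $i\ne k$ and $j$ with $A^k_j=A^i_j=1$, conjugating the diagonal linear part of $s_i^B$ by your unipotent shear produces a genuinely off-diagonal matrix, which cannot be the linear part of anything in $\Gamma(A)$. Your claim that $\phi(b_i)=a_i$ for $i\ne k$ is likewise wrong: it is exactly the generators with $A^i_k=1$ whose image must involve $a_k$.

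The paper's $\f^k$ is not a linear map at all but the quarter-turn $z_k\mapsto\sqrt{-1}\,z_k$, i.e.\ translation by $\tfrac14 e_k$ on $\R^n$, paired with $\phi^k(b_i)=a_i a_k^{A^i_k}$. A translation commutes with every linear part, and since $B^i_j=A^i_j+A^k_jA^i_k$ the linear part of $s_i^B$ already coincides with that of $a_ia_k^{A^i_k}$; the $\tfrac14$-shift then matches the half-integer translation pieces. Thus the ``shear'' you have in mind is correct at the level of the group isomorphism $\phi$ and of cohomology (indeed $(f^k)^*(x_k)=y_k+\beta_k$), but the affine automorphism of $T^n$ realizing it is a rotation of one circle factor, not an element of $\GL_n(\Z)$. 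The reason your linear map \emph{does} succeed for (Op3) is precisely the hypothesis $A_i=A_j$ for $i,j\in I$: on the $I$-block each $s_i$ has identity linear part and the $a_i$ with $i\notin I$ act by the same sign on every $I$-coordinate, so conjugation by $\tilde C$ introduces no off-diagonal terms.
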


We set $B=\Phi_S(A), \Phi^k(A), \Phi^I_C(A)$ respectively 
for the three operations introduced in the previous section. 
In order to prove the proposition above, 
it suffices to find a group isomorphism $\phi\colon G(B)\to G(A)$ and 
a $\phi$-equivariant affine automorphism $\f$ of $T^n$ which induces 
an affine diffeomorphism from $M(B)$ to $M(A)$. 

{\it The case of the operation (Op1).} 
Let $S$ and $\sigma$ be as before. 
We define a group isomorphism $\phi_S\colon G(B)\to G(A)$ by 
\begin{equation} \label{phiS}
\phi_S(b_{\sigma(i)}):=a_{i}
\end{equation}
and an affine automorphism $\f_S$ of $T^n$ by 
\[
\f_S(z_1,\dots,z_n):=(z_{\sigma(1)},\dots,z_{\sigma(n)}).
\]
Then it follows from \eqref{ai} (applied to $b_{\sigma(i)}$) that 
the $j$-th component of $\f_S(b_{\sigma(i)}(z))$ is 
$z_{\sigma(j)}(B^{\sigma(i)}_{\sigma(j)})$ for $j\not=i$ and 
$-z_{\sigma(i)}$ for $j=i$ while that of $a_i(\f_S(z))$ is 
$z_{\sigma(j)}(A^i_j)$ for $j\not=i$ and $-z_{\sigma(i)}$ for $j=i$. 
Since $A^i_j=B^{\sigma(i)}_{\sigma(j)}$ by \eqref{SA=BA}, this shows that 
$\f_S$ is $\phi_S$-equivariant. 

It follows from Lemma~\ref{f*} and \eqref{phiS} that the affine 
diffeomorphism $f_S\colon M(B)\to M(A)$ induced from $\f_S$ satisfies 
\begin{equation} \label{op1coho}
f_S^*(x_j)=y_{\sigma(j)}\quad \text{for $j=1,\dots,n$.}
\end{equation}

{\it The case of the operation (Op2).} 
We define a group isomorphism $\phi^k\colon G(B)\to G(A)$ by 
\begin{equation} \label{phik}
\phi^k(b_i):=a_ia_k^{A^i_k}
%\quad\text{for $i\not=k$,}\quad \quad \phi^k(b_k):=a_k
\end{equation}
and an affine automorphism $\f^k$ of $T^n$ by 
\[
\f^k(z_1,\dots,z_n):=(z_1,\dots,z_{k-1},\sqrt{-1}z_k,z_{k+1},\dots,z_n).
\]

We shall check that $\f^k$ is $\phi^k$-equivariant, i.e., 
\begin{equation} \label{fkb}
\f^k(b_i(z))= a_ia_k^{A^i_k}(\f^k(z)).
\end{equation}
The identity is obvious when $i=k$ because $A^k_k=0$ and 
$B^k_j=A^k_j$ for any $j$ by \eqref{2nd}.  Suppose $i\not=k$.  
Then the $j$-th component of the left hand side of \eqref{fkb} is given by 
\[
\begin{cases}z_j(B^i_j)\quad&\text{for $j\not=i,k$},\\ 
-z_i \quad&\text{for $j=i$},\\
\sqrt{-1}(z_k(B^i_k)) \quad&\text{for $j=k$},
\end{cases}
\]
while that of the right hand side of \eqref{fkb} is given by 
\[
\begin{cases}z_j(A^i_j+A^k_jA^i_k)\quad&\text{for $j\not=i,k$},\\ 
-z_i(A^k_iA^i_k) \quad&\text{for $j=i$},\\
(-1)^{A^i_k}(\sqrt{-1}z_k)(A^i_k) \quad&\text{for $j=k$}.
\end{cases}
\]
Since $B^i_j=A^i_j+A^k_jA^i_k$ by \eqref{2nd}, the $j$-th components above 
agree for $j\not=i,k$.  They also agree for $j=i$ because either $A^k_i$ or 
$A^i_k$ is zero.  
We note that $B^i_k=A^i_k$ by \eqref{2nd}, and 
the $k$-th components above are both $\sqrt{-1}z_k$ when $B^i_k=A^i_k=0$ 
and $\sqrt{-1}\bar{z_k}$ when $B^i_k=A^i_k=1$. 
Thus the $j$-th components above agree for any $j$. 

Since $A^i_k=B^i_k$ for any $i$, 
it follows from Lemma~\ref{f*} and \eqref{phik} that the affine 
diffeomorphism $f^k\colon M(B)\to M(A)$ induced from $\f^k$ satisfies 
\begin{equation} \label{op2coho}
(f^k)^*(x_j)=y_j\quad\text{for $j\not=k$}, \quad \quad 
(f^k)^*(x_k)=y_k+\sum_{i=1}^nB^i_ky_i.
\end{equation}

{\it The case of the operation (Op3).} 
The homomorphism $\GL(m;\Z)\to \GL(m;\Z/2)$ induced from the 
surjective homomorphism $\Z\to \Z/2$ is known (and easily proved) 
to be surjective.  
%(\cite[Theorem VII.6 in p.109]{newm72}).  
We take a lift of the matrix $C=(C^i_k)_{i,k\in I}$ 
to $\GL(|I|,\Z)$ and denote the lift by $\tC$. Then we define 
a group isomorphism $\phi^I_C\colon G(B)\to G(A)$ by 
\begin{equation} \label{phiIC}
\phi^I_C(b_i):=\begin{cases} \prod_{k\in I}a_k^{C^i_k}\quad&
\text{for $i\in I$},\\
a_i\quad&\text{for $i\notin I$,}
\end{cases}
\end{equation}
and the $j$-th component of an affine automorphism $\f^I_{\tC}$ of $T^n$ by 
\begin{equation} \label{fIC}
\f^I_{\tC}(z)_j:=\begin{cases} 
\prod_{\ell\in I}z_\ell^{\tC^\ell_j}
\quad&\text{for $j\in I$},\\
z_j\quad&\text{for $j\notin I$.}
\end{cases}
\end{equation}

We shall check that $\f^I_{\tC}$ is $\phi^I_C$-equivariant. 
To simplify notation we abbreviate 
$\f^I_{\tC}$ and $\phi^I_C$ as $\f$ and $\phi$ respectively.  
What we prove is the identity 
\begin{equation} \label{fb=bf}
\f(b_i(z))_j=\phi(b_i)\f(z)_j.
\end{equation}
We distinguish four cases. 

{\it Case 1.} The case where $i,j\in I$. 
As remarked in the definition of (Op3), 
 $A^k_\ell=0$ whenever $k,\ell\in I$, so 
$B^i_\ell=0$ for any $\ell\in I$ by \eqref{3rd}. 
It follows from \eqref{phiIC} and \eqref{fIC} that 
\[
\f(b_i(z))_j=(-z_i)^{\tC^i_j}
\prod_{\ell\in I,\ell\not=i}z_\ell(B^i_\ell)^{\tC^\ell_j}
=(-1)^{C^i_j}\prod_{\ell\in I}z_\ell^{\tC^\ell_j}
\]
while 
\[
\phi(b_i)\f(z)_j=(\prod_{k\in I}a_k^{C^i_k})\f(z)_j
%(\prod_{\ell \in I}z_\ell^{\tC^\ell_j})
=(-1)^{C^i_j}\prod_{\ell\in I}z_\ell^{\tC^\ell_j}.
\]

{\it Case 2.} The case where $i\in I$ but $j\notin I$. 
In this case we have 
\[
\f(b_i(z))_j=z_j(B^i_j)
\]
while 
\[
\phi(b_i)\f(z)_j
%=(\prod_{k\in I}a_k^{C^i_k})z_j
=z_j(\sum_{k\in I}C^i_kA^k_j)=z_j(B^i_j)
\]
where the last identity follows from \eqref{3rd}. 

{\it Case 3.} The case where $i\notin I$ but $j\in I$. 
In this case we have 
\[
\f(b_i(z))_j=\prod_{\ell\in I}z_\ell(B^i_\ell)^{\tC^\ell_j}
\]
while 
\[
\phi(b_i)\f(z)_j
%=a_i(\prod_{\ell\in I}z_\ell^{\tC^\ell_j})
=(\prod_{\ell\in I}z_\ell^{\tC^\ell_j})(A^i_j)
=\prod_{\ell\in I}z_\ell(A^i_j)^{\tC^\ell_j}.
\]
Since $B^i_\ell=A^i_\ell$ for $i\notin I$ by \eqref{3rd}, 
the above verifies \eqref{fb=bf}. 

{\it Case 4.} The case where $i,j\notin I$. 
In this case 
\[
\f(b_i(z))_j=z_j(B^i_j)
\]
while 
\[
\phi(b_i)\f(z)_j=z_j(A^i_j). 
\]
Since $B^i_j=A^i_j$ for $i\notin I$ by \eqref{3rd}, the above 
verifies \eqref{fb=bf}. 

It follows from Lemma~\ref{f*} and \eqref{phiIC} that the affine 
diffeomorphism $f^I_C\colon M(B)\to M(A)$ induced from $\f^I_C$ satisfies 
\begin{equation} \label{op3coho}
(f^I_C)^*(x_j)=\begin{cases}\sum_{i\in I}C^i_jy_i \quad&\text{for $j\in I$,}\\
y_j\quad&\text{for $j\notin I$.}
\end{cases}
\end{equation}

\section{Cohomology isomorphisms} \label{sect:cohom}

In this section we prove the latter statement in Theorem~\ref{main} 
and the implication (3) $\Rightarrow$ (1) at the same time, i.e. the 
purpose of this section is to prove the following. 

\begin{prop} \label{MAMBcoho}
Any isomorphism $H^*(M(A);\Z/2)\to H^*(M(B);\Z/2)$ is induced from 
a composition of affine diffeomorphisms 
corresponding to the three operations (Op1), (Op2) and (Op3), 
and if $H^*(M(A);\Z/2)$ and $H^*(M(B);\Z/2)$ are isomorphic as graded 
rings, then $A$ and $B$ are Bott equivalent. 
\end{prop}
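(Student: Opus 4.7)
The plan is to extract from $H^*(M(A);\Z/2)$ some intrinsic ring-theoretic data that any graded isomorphism must preserve, and then to show that the only freedom in matching this data between $M(A)$ and $M(B)$ is captured by the three operations (Op1), (Op2), (Op3). The key notion, foreshadowed in the introduction, is that of an \emph{eigen-element}: a nonzero $v\in H^1(M(A);\Z/2)$ for which there exists $w\in H^1$ with $v^2=vw$. By Lemma~\ref{cohoA}, each generator $x_j$ is an eigen-element with eigenvalue $\alpha_j:=\sum_i A^i_j x_i$; my first step is to classify all eigen-elements, group them into \emph{eigen-spaces} according to the relation of sharing a common eigenvalue, and verify that these notions are purely ring-theoretic and hence preserved by any graded ring isomorphism $\psi\colon H^*(M(A);\Z/2)\to H^*(M(B);\Z/2)$.

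With this structure in hand, I would interpret the three matrix operations as the moves available within the eigen-element data. Operation (Op1) is a relabelling of the basis preserving upper-triangularity; operation (Op3) is an internal linear change of basis within a single eigen-space, consistent because the defining condition $A_i=A_j$ for $i,j\in I$ forces $A^i_j=0$ there; and operation (Op2) is the involution $x_k\mapsto x_k+\alpha_k$, consistent because $x_k(x_k+\alpha_k)=0$ in the cohomology ring. The aim is to show that any matrix $F\in\GL(n,\Z/2)$ satisfying $\psi(x_j)=\sum_i F^i_j y_i$ factors, after possibly composing with an (Op1)-permutation, as a product of elementary matrices realized by (Op2) and (Op3).

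The computational core is as follows. Applying $\psi$ to the relation $x_j^2=x_j\alpha_j$ and expanding in a monomial basis for $H^2(M(B);\Z/2)$ using the relations $y_i^2=y_i\sum_l B^l_i y_l$ produces a system of equations tying the entries of $F$ to those of $A$ and $B$. I would then argue by induction on $n$, using the projection $M(A)\to T^{n-1}/G_{n-1}$ to peel off the last generator: after composing $\psi$ with affine diffeomorphisms coming from the three operations, the image $\psi(x_n)$ can be normalized so that $\psi$ descends to a graded ring isomorphism between cohomology rings of lower-dimensional real Bott manifolds, to which the inductive hypothesis applies. Collecting the successive operations and invoking \eqref{op1coho}, \eqref{op2coho} and \eqref{op3coho}, one obtains an affine diffeomorphism $M(B)\to M(A)$ whose induced cohomology map agrees with $\psi$ on $H^1$, and hence on all of $H^*$ since $H^*$ is generated by $H^1$ by Lemma~\ref{cohoA}.

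The main obstacle I expect is the rigidity statement at the heart of the argument: that the eigen-space decomposition, together with the partial order on eigen-spaces inherited from the Bott filtration $\{G_k\}$, is a canonical invariant of the graded ring, so that the only residual freedom in $\psi$ is exactly the three operations. The delicate point is the interplay between (Op2) and (Op3): (Op2) shifts the eigenvalue attached to a single $x_k$, while (Op3) mixes generators sharing a common eigenvalue, and one must verify that together they suffice to normalize any isomorphism before invoking the inductive step. Once this rigidity is set up cleanly, the factorization of $F$ and the promotion of the induced cohomology map to a geometric affine diffeomorphism become essentially bookkeeping.
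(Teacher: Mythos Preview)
Your conceptual framework matches the paper's: the key invariant is the eigen-structure, and the three operations act naturally on it. The classification you propose as a first step is exactly Lemma~\ref{EAa}. One terminological point: the paper reserves \emph{eigen-element} for the $\alpha$ in $x^2=\alpha x$, not for $x$; with your convention the definition as stated (``there exists $w$ with $v^2=vw$'') is vacuous since $w=v$ always works, so you would need to require $w\neq v$.

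Where you diverge from the paper is in the overall architecture. You propose induction on $n$, peeling off $x_n$ and descending to the lower tower. The paper instead proceeds \emph{directly}, in three global passes. First, since any isomorphism must permute the set of eigen-elements $\{\alpha_j\}$, compose with an (Op1)-move so that $\varphi(\alpha_j)=\beta_j$ for every $j$. Second, $\varphi$ now induces an isomorphism $\tEA(\alpha)\to\tEB(\beta)$ of reduced eigen-spaces (eigen-space modulo the line spanned by the eigen-element) for each $\alpha$; the matrix $C$ of this restriction is exactly the data of an (Op3)-move, and composing with it reduces to $\varphi(x_j)\in\{y_j,\,y_j+\beta_j\}$ for every $j$. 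Third, (Op2)-moves kill the remaining ambiguity, forcing $\varphi(x_j)=y_j$ for all $j$ and hence $A=B$.

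This direct argument sidesteps both obstacles you anticipate. The ``partial order on eigen-spaces inherited from the Bott filtration'' plays no role: one never needs to identify a last generator, only to know that eigen-elements and eigen-spaces are intrinsic. And the ``delicate interplay'' between (Op2) and (Op3) dissolves once you pass to the reduced eigen-space $\tEA(\alpha)=\EA(\alpha)/\langle\alpha\rangle$: (Op3) acts as the full $\GL$ on this quotient, while (Op2) is precisely the choice of lift back to $\EA(\alpha)$, so the two operations are cleanly decoupled. Your inductive scheme can probably be pushed through, but arranging $\varphi(x_n)=y_n$ before descending already amounts to running the three passes above on the last coordinate, so the recursion adds bookkeeping without payoff. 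One small correction: (Op2) does not ``shift the eigenvalue attached to $x_k$''; both $x_k$ and $x_k+\alpha_k$ lie in $\EA(\alpha_k)$.
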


We introduce a notion and prepare a lemma. 
Remember that 
\begin{equation} \label{HMA}
H^*(M(A);\Z/2)=\Z/2[x_1,\dots,x_n]/(x_j^2=x_j\sum_{i=1}^nA^i_jx_i\mid 
j=1,\dots,n).
\end{equation}
One easily sees that 
products $x_{i_1}\dots x_{i_q}$ $(1\le i_1<\dots<i_q\le n)$ form a 
basis of $H^q(M(A);\Z/2)$ as a vector space over $\Z/2$ so that 
the dimension of $H^q(M(A);\Z/2)$ is $\binom{n}{q}$ 
(see \cite[Lemma 5.3]{ma-pa08}). 

We set 
\begin{equation*} \label{alphaj}
\alpha_j=\sum_{i=1}^nA^i_jx_i\quad\text{for $j=1,\dots,n$}
\end{equation*}
where $\alpha_1=0$ since $A$ is an upper triangular matrix 
with zero diagonal entries. 
Then the relations in \eqref{HMA} are written as 
\begin{equation} \label{alpha}
x_j^2=\alpha_jx_j \quad\text{for $j=1,\dots,n$.}
\end{equation}
Motivated by this identity we introduce the following notion. 

\begin{defi} 
We call an element $\alpha\in H^1(M(A);\Z/2)$ 
an {\it eigen-element} of $H^*(M(A);\Z/2)$ 
if there exists $x\in H^1(M(A);\Z/2)$ such that $x^2=\alpha x$, $x\not=0$ 
and $x\not=\alpha$. 
The set of all elements $x\in H^1(M(A);\Z/2)$ satisfying the equation 
$x^2=\alpha x$ is a vector subspace of $H^1(M(A);\Z/2)$ which 
we call the {\it eigen-space} of 
$\alpha$ and denote by $\EA(\alpha)$. We also introduce a notation 
$\tEA(\alpha)$ which is the quotient of $\EA(\alpha)$ by the subspace 
spanned by $\alpha$, and call it the {\it reduced eigen-space} of 
$\alpha$. 
\end{defi} 

Eigen-elements and (reduced) eigen-spaces are invariants 
preserved under graded ring isomorphisms. 
By \eqref{alpha} $\alpha_j$'s are eigen-elements of 
$H^*(M(A);\Z/2)$ and the following lemma shows that 
these are the only eigen-elements. 

\begin{lemm} \label{EAa}
If $\alpha$ is an eigen-element of $H^*(M(A);\Z/2)$, then $\alpha=\alpha_j$ 
for some $j$ and the eigen-space $\EA(\alpha)$ of $\alpha$ is 
generated by $\alpha$ and $x_i$'s with $\alpha_i=\alpha$. 
\end{lemm}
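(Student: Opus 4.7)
The plan is to translate the eigen-equation $x^2=\alpha x$ into a concrete linear system in the coordinates of $x$ in the basis $x_1,\dots,x_n$ of $H^1(M(A);\Z/2)$, and then to exploit the largest nonzero coordinate of $x$ in order both to pin down $\alpha$ and to describe $\EA(\alpha)$ inductively.

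First I would reduce to a linear system. Writing $x=\sum_i c_i x_i$ and $\alpha=\sum_k a_k x_k$, the additivity of squaring in characteristic $2$ gives $x^2=\sum_i c_i x_i^2=\sum_i c_i\alpha_i x_i$, so the eigen-equation is equivalent to $\sum_i c_i(\alpha_i+\alpha)x_i=0$ in $H^2$. Expanding in the basis $\{x_p x_q:p<q\}$ and reducing each $x_i^2$ that appears via $x_i^2=\alpha_i x_i$, the upper-triangular, zero-diagonal structure of $A$ forces, for every pair $p<q$, the scalar equation
\begin{equation*}
a_q\,c_p + \bigl(a_p+A^p_q(1+a_q)\bigr)\,c_q = 0. \tag{$\ast_{p,q}$}
\end{equation*}

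The main step is to analyze $(\ast_{p,q})$ by focusing on $q:=q_{\max}(x)=\max\{i:c_i\ne 0\}$ for a nonzero $x\in\EA(\alpha)$, so $c_q=1$. If $a_q=1$, then $(\ast_{p,q})$ with $p<q$ collapses to $c_p=a_p$, and $(\ast_{q,q'})$ with $q'>q$ (where $c_{q'}=0$) collapses to $a_{q'}=0$; combining these forces $x=\alpha$. If instead $a_q=0$, then $(\ast_{p,q})$ with $p<q$ reads $A^p_q=a_p$, and $(\ast_{q,q'})$ again gives $a_{q'}=0$; together with $A^{q'}_q=0$ for $q'\ge q$ this says $\alpha=\alpha_q$, placing $q$ in $J:=\{j:\alpha_j=\alpha\}$. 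The first assertion of the lemma then follows immediately: if $\alpha$ is an eigen-element some $x\in\EA(\alpha)$ satisfies $x\notin\{0,\alpha\}$, the first subcase would force $x=\alpha$, so the second subcase applies and $\alpha=\alpha_{q_{\max}(x)}$.

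For the description of $\EA(\alpha)$ I would induct on $q_{\max}(x)$. The base case $x=0$ is trivial. For the inductive step, either $x=\alpha$ already lies in the claimed span, or $\alpha_q=\alpha$, in which case $x_q\in\EA(\alpha)$ and $x+x_q\in\EA(\alpha)$ has strictly smaller $q_{\max}$; induction expresses $x+x_q$ as a $\Z/2$-combination of $\alpha$ and the $x_j$ with $\alpha_j=\alpha$, and adding back $x_q$ (with $q\in J$) gives the same expression for $x$. The only real obstacle is the bookkeeping involved in the derivation of $(\ast_{p,q})$: the substitution $x_i^2=\alpha_i x_i$ on the diagonal of the double sum contributes an extra term $a_q A^p_q$ that must be gathered with the off-diagonal contributions in the basis $\{x_p x_q\}_{p<q}$; once this is in hand, the two-case analysis on $a_q$ is essentially forced by the upper-triangular structure.
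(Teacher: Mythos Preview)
Your proof is correct and follows essentially the same approach as the paper: both arguments expand the equation $x(x+\alpha)=0$ in the basis $\{x_px_q\}_{p<q}$, focus on the largest index $q$ with $c_q\neq 0$, split into the two cases according to whether $x_q$ appears in $\alpha$, and in the second case strip off $x_q$ and induct. The only cosmetic difference is that you package the basis expansion into the explicit linear system $(\ast_{p,q})$ and use it uniformly for both assertions, whereas the paper handles the first assertion by a separate leading-term comparison of $x$ and $x+\alpha$.
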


\begin{proof}
By the definition of eigen-element there exists 
a non-zero element $x\in H^1(M(A);\Z/2)$ different from $\alpha$ 
such that $x^2=\alpha x$. Since both $x$ and $x+\alpha$ are non-zero, 
there exist $i$ and $j$ such that $x=x_i+p_i$ and $x+\alpha=x_j+q_j$ 
where $p_i$ is a 
linear combination of $x_1,\dots,x_{i-1}$ and $q_j$ is a linear 
combination of $x_1,\dots,x_{j-1}$. Then   
\[
x_ix_j+x_iq_j+x_jp_i+p_iq_j=0
\]
because $x(x+\alpha)=0$.
As remarked above, products $x_{i_1}x_{i_2}$ $(1\le i_1<i_2\le n)$ form 
a basis of $H^2(M(A);\Z/2)$, so $i$ must be equal to $j$ for the identity 
above to hold. 
Then as $x_j^2=x_j\alpha_j$, it follows from the identity above that 
$\alpha_j=q_j+p_i$ (and $p_iq_j=0$). This implies that $\alpha=\alpha_j$, 
proving the former statement of the lemma. 

We express a non-zero element $x\in \EA(\alpha)$ 
as $\sum_{i=1}^nc_ix_i$ $(c_i\in \Z/2)$ and let $m$ be 
the maximum number among $i$'s with $c_i\not=0$. 

\emph{Case 1.}  The case where $x_m$ appears when we express $\alpha$ 
as a linear combination of $x_1,\dots,x_n$. We express $x(x+\alpha)$ 
as a linear combination of the basis elements $x_{i_1}x_{i_2}$ 
$(1\le i_1<i_2\le n)$.  Since $x_m$ appears in both $x$ and $\alpha$, it 
does not appear in $x+\alpha$.  Therefore the term in $x(x+\alpha)$ which 
contains $x_m$ is $x_m(x+\alpha)$ and it must vanish because 
$x(x+\alpha)=0$.  Therefore $x=\alpha$. 

\emph{Case 2.} The case where $x_m$ does not appear in the linear 
expression of $\alpha$. In this case, 
the term in $x(x+\alpha)$ which contains $x_m$ is 
$x_m(x_m+\alpha)=x_m(\alpha_m+\alpha)$ since $x_m^2=\alpha_m x_m$, 
and it must vanish because 
$x(x+\alpha)=0$.  Therefore $\alpha_m=\alpha$. 
The sum $x+x_m$ is again an element of $\EA(\alpha)$. 
If $x\not=x_m$ (equivalently $x+x_m$ is non-zero), then the same argument 
applied to $x+x_m$ shows that there exists $m_1(\not=m)$ such that 
$\alpha_{m_1}=\alpha$ and $x+x_m+x_{m_1}$ is again an element of 
$\EA(\alpha)$. Repeating 
this argument, $x$ ends up with a linear combination of 
$x_i$'s with $\alpha_i=\alpha$. 
\end{proof}

With this preparation we shall prove Proposition~\ref{MAMBcoho}. 

\begin{proof}[Proof of Proposition~\ref{MAMBcoho}] 
Let $B$ be another element of $\T(n)$.  
We denote the canonical basis of $H^*(M(B);\Z/2)$ by $y_1,\dots,y_n$ 
and the elements in $H^1(M(B);\Z/2)$ corresponding to $\alpha_j$'s 
by $\beta_j$'s, i.e., 
$\beta_j=\sum_{i=1}B^i_jy_i$ for $j=1,\dots,n$.

Let $\varphi\colon H^*(M(A);\Z/2)\to H^*(M(B);\Z/2)$ be a graded ring 
isomorphism.  
It preserves the eigen-elements and (reduced) eigen-spaces. 
In the following we shall show that we can change $\varphi$ into the 
identity map by composing isomorphisms induced from 
affine diffeomorphisms corresponding to 
the three operations (Op1), (Op2) and (Op3). 

Through the operation (Op1) we may assume that 
$\varphi(\alpha_j)=\beta_j$ for any $j$ because of \eqref{op1coho}.  
Then $\varphi$ restricts to an isomorphism $\EA(\alpha_j) 
\to \EB(\beta_j)$ between eigen-spaces and induces an isomorphism 
$\tEA(\alpha_j)\to \tEB(\beta_j)$ between reduced eigen-spaces. 

Let $\alpha$ (resp. $\beta$) stand for $\alpha_j$ (resp. $\beta_j$) 
and suppose that $\varphi(\alpha)=\beta$.  Let $I$ be a subset of 
$\{1,\dots,n\}$ such that $\alpha_i=\alpha$ if and only if $i\in I$. 
We denote the image of $x_i$ (resp. $y_i$) in $\tEA(\alpha)$ (resp. 
$\tEB(\beta)$) by 
$\bar x_i$ (resp. $\bar y_i$).  The $\bar x_i$'s (resp. $\bar y_i$'s) 
for $i\in I$ form a basis of $\tEA(\alpha)$ (resp. $\tEB(\beta)$) 
by Lemma~\ref{EAa}, so if we express $\varphi(\bar x_j)=
\sum_{i\in I}C^i_j\bar y_i$ with $C^i_j\in \Z/2$, then the matrix 
$C=(C^i_j)_{i,j\in I}$ is invertible.  Therefore, through the  
operation (Op3), we may assume that $C$ is the identity matrix because of 
\eqref{op3coho}. This means that we may assume that 
$\varphi(x_j)=y_j$ or $y_j+\beta_j$ for each $j=1,\dots,n$. 
Finally through the operation (Op2), we may assume that 
$\varphi(x_j)=y_j$ for any $j$ because of \eqref{op2coho} and hence 
$A=B$ (and $\varphi$ is the identity) because $\varphi(\alpha_j)
=\beta_j$, $\alpha_j=\sum_{i=1}^nA^i_jx_i$ and $\beta_j=\sum_{i=1}^nB^i_jy_i$ 
for any $j$, proving the proposition. 
\end{proof}

\section{Unique decomposition of real Bott manifolds} \label{sect:decom}

We say that a real Bott manifold is \emph{indecomposable} if it is not 
diffeomorphic to a product of more than one real Bott manifolds.  
The purpose of this section is to prove Theorem~\ref{main1} in the 
Introduction, that is 

\begin{theo} \label{bdeco}
The decomposition of a real Bott manifold into a product of indecomposable 
real Bott manifolds is unique up to permutations of the indecomposable 
factors.  Namely, if $\prod_{i=1}^k M_i$ is 
diffeomorphic to $\prod_{j=1}^\ell N_j$ where $M_i$ and $N_j$ are 
indecomposable real Bott manifolds, then $k=\ell$ and there is a permutation 
$\sigma$ on $\{1,\dots,k=\ell\}$ such that $M_i$ is diffeomorphic to 
$N_{\sigma(i)}$ for $i=1,\dots,k$. 
\end{theo}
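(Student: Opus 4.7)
The plan is to translate the statement into a uniqueness question at the matrix level modulo Bott equivalence and to extract a combinatorial invariant from the ``connection pattern'' of $A$. I would define the undirected graph $G(A)$ on $\{1,\dots,n\}$ with an edge $\{i,j\}$ whenever $A^i_j=1$; equivalently, $i$ is adjacent to $j$ exactly when $x_i$ lies in the support of the eigen-element $\alpha_j$. A product decomposition $M(A)\cong\prod_{i=1}^k M(A_i)$ corresponds, via the K\"unneth formula and Corollary~\ref{maincoro}, to $A$ being Bott equivalent to the block-diagonal matrix $\operatorname{diag}(A_1,\dots,A_k)$, and $M(A_i)$ is indecomposable exactly when $G(A_i)$ is connected. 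Thus Theorem~\ref{bdeco} will follow once one shows that two Bott equivalent block-diagonal matrices in $\T(n)$ with connected blocks must have their block data coincide up to a permutation of blocks and a Bott equivalence inside each block.

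The technical heart is the following invariance: each of the operations (Op1), (Op2), (Op3) preserves the partition of $\{1,\dots,n\}$ into connected components of $G(A)$, modulo the relabeling from (Op1). The case (Op1) is tautological. For (Op2), which replaces $A_j$ by $A_j+A^k_jA_k$, only the columns $j$ with $A^k_j=1$ are modified; for such $j$ the indices $k$ and $j$ already lie in one component, and a short case analysis on the four possibilities for $(A^i_j,A^i_k)$ shows that any edge $\{\ell,j\}$ broken by the operation is bypassed by the surviving two-edge path through $k$, so no merging or splitting occurs. For (Op3), the hypothesis $A_i=A_j$ for $i,j\in I$ forces $I$ into a single component, and only row entries indexed by $I$ at positions outside $I$ can change; invertibility of $C$ then guarantees that the set of non-$I$ neighbours of $I$ is preserved, since a new row of $I$ becoming identically zero at some outside column would exhibit a nontrivial element in the kernel of $C$.

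With the invariance in hand, the rest is short. Given $M\cong\prod_{i=1}^k M(A_i)\cong\prod_{j=1}^\ell M(B_j)$ with indecomposable factors, Corollary~\ref{maincoro} produces a Bott equivalence between $A:=\operatorname{diag}(A_1,\dots,A_k)$ and $B:=\operatorname{diag}(B_1,\dots,B_\ell)$. By the invariance, the component partitions of $G(A)$ and $G(B)$ agree up to a permutation of $\{1,\dots,n\}$, forcing $k=\ell$ and yielding a permutation $\sigma$ matching block sizes. One then observes that, when applied to a block-diagonal matrix, each of the three operations either permutes whole blocks ((Op1)) or acts strictly within a single block ((Op2), (Op3)), because the defining data $k$ or $I$ necessarily sits in one component. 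Consequently the given Bott equivalence $A\sim B$ decomposes block by block into Bott equivalences $A_i\sim B_{\sigma(i)}$, and a final application of Corollary~\ref{maincoro} produces the required diffeomorphisms $M(A_i)\cong M(B_{\sigma(i)})$.

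The main obstacle I expect is the invariance claim in the middle paragraph, especially the verification for (Op3): the row replacement is a global linear manipulation on the rows indexed by $I$, and one must carefully rule out any linear-algebraic cancellation that could sever a connection between $I$ and an external neighbour. Invertibility of $C$ is precisely the hypothesis that prevents this; pinning down exactly where and how it enters, and formulating the claim so that it is manifestly intrinsic to the isomorphism class of $H^*(M(A);\Z/2)$ via eigen-elements, is the step that requires the most care.
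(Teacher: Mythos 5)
Your key invariance claim—that the three operations preserve the partition of $\{1,\dots,n\}$ into connected components of the graph $G(A)$—is false for (Op3), and this breaks the whole argument. The premise you state, that ``the hypothesis $A_i=A_j$ for $i,j\in I$ forces $I$ into a single component,'' already fails: the equivalence class of zero columns, $I=\{i : A_i=0\}$, need not lie in one component. Concretely, take
\[
A=\begin{pmatrix} 0 & 1 & 0 & 0 \\ 0 & 0 & 0 & 0 \\ 0 & 0 & 0 & 1 \\ 0 & 0 & 0 & 0 \end{pmatrix},
\]
whose graph has the two components $\{1,2\}$ and $\{3,4\}$ (so $M(A)$ is a product of two Klein bottles). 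Here $I=\{1,3\}$ is a legitimate (Op3) index set since $A_1=A_3=0$ and $A_2\ne 0\ne A_4$, yet $1$ and $3$ lie in different components. Applying (Op3) with the invertible matrix $C=\bigl(\begin{smallmatrix}1&1\\0&1\end{smallmatrix}\bigr)$ on $I$ replaces row $1$ by $A^1+A^3$ and leaves row $3$ alone, producing
\[
B=\begin{pmatrix} 0 & 1 & 0 & 1 \\ 0 & 0 & 0 & 0 \\ 0 & 0 & 0 & 1 \\ 0 & 0 & 0 & 0 \end{pmatrix},
\]
whose graph has edges $\{1,2\}$, $\{1,4\}$, $\{3,4\}$ and is therefore connected. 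So a single (Op3) move has merged two components into one; the sets of neighbours of $I$ as a union may be preserved, but the partition is not, so connectedness of $G(A)$ is not a Bott-equivalence invariant and ``$G(A_i)$ connected $\Leftrightarrow$ $M(A_i)$ indecomposable'' does not hold.

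Beyond the invariance failure, there is also a soft spot in the final reduction. Even if the partitions did agree, the assertion that, when applied to a block-diagonal matrix, ``(Op1) permutes whole blocks'' is too strong: (Op1) can shuffle indices across blocks while keeping the result upper triangular, so the intermediate matrices in a Bott equivalence need not be block-diagonal, and one cannot simply restrict the equivalence to blocks without further argument. The paper avoids all of this by working intrinsically in the cohomology ring rather than with a combinatorial graph on the matrix: it introduces the subspaces $N(\MH)$ and $S(\MH)$ defined via the multiplicative structure, splits $\MH$ canonically as $\Lambda(V)\otimes\MH_S$ with $\MH_S$ semisimple (and shows this splitting is unique), and then proves unique decomposition of semisimple Bott rings into simple ones by an induction that peels off a Klein pair and passes to the quotient ring. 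Since these invariants are defined intrinsically, no check on the three matrix operations is needed, and the circularity and failure you run into never arise.
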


$H^*(\prod_{i=1}^kM_i;\Z/2)=\bigotimes_{i=1}^kH^*(M_i;\Z/2)$
by K\"unneth formula  
and the diffeomorphism types of real Bott manifolds are 
detected by cohomology rings with $\Z/2$ coefficient by 
Corollary~\ref{maincoro}, so 
the theorem above reduces to a problem on the decomposition of 
a cohomology ring into tensor products over $\Z/2$. 

We call a graded ring over $\Z/2$ 
a {\it Bott ring} of rank $n$ if it is isomorphic to the cohomology ring 
with $\Z/2$ coefficient of a real Bott manifold of dimension $n$. 
Let $\MH$ be a Bott ring of rank $n$, so it has an expression 
\begin{equation} \label{MH}
\MH=\Z/2[x_1,\dots,x_n]/(x_j^2=x_j\sum_{i=1}^nA^i_jx_i\mid 
j=1,\dots,n)
\end{equation} 
with $A\in\T(n)$.  
The eigen-elements of $\MH$ are 
\begin{equation} \label{eigenj}
\text{$\alpha_j=\sum_{i=1}^nA^i_jx_i$\quad $(j=1,\dots,n)$.}
\end{equation}

We denote by $\MH^q$ the degree $q$ part of $\MH$ and define 
\[
\begin{split}
N(\MH)&:=\{ x\in \MH^1\mid x^2=0\},\text{ and}\\
S(\MH)&:=\{x\in \MH^1\backslash\{0\}\mid \exists \bx\in\MH^1\backslash\{0\}
\ \text{with $x\bx=0$ and $\bx\not=x$}\}.
\end{split}
\]
In terms of eigen-elements and eigen-spaces, 
$N(\MH)$ is the eigen-space of the zero eigen-element. 
Also, if we write $\bx=x+\alpha$ with $\alpha\in\MH^1$, then 
$x\bx=0$ means that $x^2=\alpha x$; so $S(\MH)$ with 
the zero element added is the union of eigen-spaces of all non-zero 
eigen-elements in $\MH$. 
The latter statement in Lemma~\ref{EAa} shows that the eigen-element 
$\alpha$ is uniquely determined by $x$, hence so is $\bx$.

$N(\MH)=\MH^1$ if and only if 
$A$ in \eqref{MH} is the zero matrix. 
Unless $N(\MH)=\MH^1$, $S(\MH)\not=\emptyset$. 

\begin{lemm} \label{MHS}
The graded subring $\MH_S$ of a Bott ring $\MH$ generated by $S(\MH)$ 
is a Bott ring.
\end{lemm}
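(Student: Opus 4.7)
The plan is to prove the lemma by induction on the rank $n$ of $\MH$, with $n=0$ trivial. Writing $\MH$ in the form \eqref{MH} with $A\in\T(n)$, I set $V=\mathrm{span}(S(\MH))\subseteq\MH^1$, so $\MH_S$ is the subalgebra generated by $V$; the goal is to show either $V=\MH^1$, in which case $\MH_S=\MH$ is a Bott ring, or to change the presentation of $\MH$ via (Op3) in order to split off a free generator and invoke induction.

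First I describe $V$ explicitly. For each $j$ with $\alpha_j\neq 0$, the elements $x_j$ and $x_j+\alpha_j$ are distinct and nonzero (since $A^j_j=0$ forces $\alpha_j$ to have no $x_j$-coefficient) and multiply to zero, so both lie in $S(\MH)$. Combined with Lemma~\ref{EAa}, this yields $V=\sum_{\alpha\ne 0}\EA(\alpha)=\mathrm{span}(\alpha_j,\,x_j\colon \alpha_j\neq 0)$. Let $I_0=\{j:\alpha_j=0\}$, assumed nonempty (otherwise $V=\MH^1$), and let $\pi\colon\MH^1\to\mathrm{span}(x_i:i\in I_0)$ be the projection killing $\{x_j:j\notin I_0\}$. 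Since $V$ contains $\ker\pi$ and $\pi(V)$ equals the column span of the submatrix $B=(A^i_j)_{i\in I_0,\,j\notin I_0}$, the condition $V\neq\MH^1$ becomes $\rank B<|I_0|$.

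Since $I_0$ is an (Op3)-equivalence class (all columns are zero), I apply $\Phi^{I_0}_C$ for an invertible $C\in\GL_{|I_0|}(\Z/2)$ with $CB$ having a zero row at some position $i\in I_0$; by Section~\ref{sect:affine} this does not change $\MH$ up to isomorphism, and in the new matrix both row $i$ and column $i$ are zero, so the relations of $\MH$ split as $x_i^2=0$ and $x_j^2=\alpha_j x_j$ with $\alpha_j\in\mathrm{span}(x_k:k\neq i)$ for $j\neq i$, yielding $\MH\cong\MH'\otimes\Z/2[x_i]/(x_i^2)$ where $\MH'$ is a Bott ring of rank $n-1$. A direct case analysis---writing $x=y+cx_i$, $\bx=y'+c'x_i$ with $y,y'\in(\MH')^1$, $c,c'\in\Z/2$, and comparing the components of $x\bx=0$ in $(\MH')^2$ and $(\MH')^1\cdot x_i$---shows that $\bx\notin\{0,x\}$ forces $c=0$, so $S(\MH)=S(\MH')$ and $\MH_S=\MH'_S$, which is a Bott ring by induction. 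The main obstacle is producing the free generator when $V\neq\MH^1$: the translation into a rank condition on $B$ via Lemma~\ref{EAa}, followed by an (Op3) reduction exploiting the rank deficiency, is the nontrivial step.
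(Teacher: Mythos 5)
Your proof is correct, but it takes a genuinely different route from the paper's. The paper normalizes once and for all: it applies (Op1) to move the zero columns of $A$ to positions $1,\dots,\ell$ (so $I_0=\{1,\dots,\ell\}$ is consecutive) and then applies (Op3) to $I_0$ to annihilate the $\ell-m$ dependent rows in one shot, after which $\MH_S$ is visibly the Bott ring of the bottom-right $(n-\ell+m)\times(n-\ell+m)$ submatrix --- no induction needed. You instead run an induction on rank, splitting off a single exterior generator $x_i$ at each step via a rank-deficiency argument on $B=(A^i_j)_{i\in I_0,j\notin I_0}$, and reduce to $\MH'$ of rank $n-1$. Both are essentially matrix normalizations through the same operations; the paper's version is shorter but states the final identification of $\MH_S$ with a submatrix ring without proof, whereas your inductive version makes the key invariance $S(\MH)=S(\MH')$ explicit, which is exactly the content the paper is sweeping under the rug. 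One small point worth flagging: your application of $\Phi^{I_0}_C$ with $I_0$ not necessarily a consecutive block can produce a matrix outside $\T(n)$ (e.g.\ $I_0=\{1,3\}$ in $\T(3)$ with the swap matrix), so, as the paper does, you should first apply (Op1) to arrange $I_0=\{1,\dots,\ell\}$; this is harmless since it does not change the isomorphism type of $\MH$, but it is needed for (Op3) to stay inside $\T(n)$.
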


\begin{proof}
The isomorphism class of $\MH$ does not change through the three operations 
(Op1), (Op2) and (Op3).  Through (Op1) we may assume that the first 
$\ell$ columns of the matrix $A$ in \eqref{MH} are all zero but none 
of the remaining columns is zero.  If the maximum number of 
linearly independent vectors in the first $\ell$ rows of $A$ is $m$, then 
we may assume that the first $\ell-m$ rows are zero by applying the operation 
(Op3) to the first $\ell$ columns. Then $\MH_S$ is the Bott ring 
associated with the $(n-\ell+m)\times(n-\ell+m)$ submatrix of $A$ at the 
right-low corner of $A$. 
\end{proof}

Let $\MH_S$ be as in Lemma~\ref{MHS} and 
let $V$ be a subspace of $N(\MH)$ complementary to $N(\MH)\cap \MH_S^1$. 
The dimension of $V$ is $\ell-m$ in the proof of Lemma~\ref{MHS}.  
The graded subalgebra of $\MH$ generated by $V$ is an exterior algebra 
$\Lambda(V)$, so 
\begin{equation} \label{MH0}
\MH=\Lambda(V)\otimes \MH_S.
\end{equation}

We say that a Bott ring $\MH$ is \emph{semisimple} if $\MH$ is generated by 
$S(\MH)$. 
Clearly $\MH_S$ is semisimple and $\MH$ is semisimple if and only if 
$\MH=\MH_S$. 

\begin{lemm} \label{YMA}
Let $\MH$ be a Bott ring. 
If $\MH=\bigotimes_{i=1}^r\MH_i$ with Bott subrings $\MH_i$'s 
of $\MH$, then $S(\MH)=\coprod_{i=1}^rS(\MH_i)$. 
Therefore $\MH$ is semisimple if and only if all $\MH_i$'s are semisimple. 
\end{lemm}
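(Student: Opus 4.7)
The plan is to exploit the additive decomposition $\MH^1 = \bigoplus_{i=1}^r \MH_i^1$ inherited from the tensor product, translate the equation defining $S(\MH)$ into componentwise and cross-term relations, and show that every element of $S(\MH)$ is forced to lie in a single factor $\MH_i^1$. The inclusions $S(\MH_i)\subseteq S(\MH)$ and their pairwise disjointness inside $\MH^1$ are automatic: any witness $\bar x\in\MH_i^1$ for $x\in S(\MH_i)$ remains a witness in $\MH$, while $\MH_i^1\cap\MH_j^1=0$ for $i\neq j$ and $0\notin S(\cdot)$. The final sentence of the lemma then follows at once, since $\MH$ is generated by $S(\MH)=\coprod_i S(\MH_i)$ exactly when each $\MH_i$ is generated by $S(\MH_i)$.

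For the main inclusion $S(\MH)\subseteq\bigcup_i\MH_i^1$, I would take $x\in S(\MH)$ with witness $\bx$ and set $\alpha:=x+\bx$, so that $x^2=\alpha x$ with $x$, $\alpha$, and $x+\alpha$ all nonzero. Writing $x=\sum_i x_i$ and $\alpha=\sum_i\alpha_i$ with $x_i,\alpha_i\in\MH_i^1$, and using
\[
\MH^2 \;=\; \bigoplus_{i=1}^r \MH_i^2 \;\oplus\; \bigoplus_{i<j} \MH_i^1\otimes\MH_j^1,
\]
the equation $x^2=\alpha x$ separates into the diagonal system $x_i^2=\alpha_i x_i$ in each $\MH_i^2$ together with the cross-term system
\[
\alpha_i x_j + \alpha_j x_i = 0 \quad\text{in } \MH_i^1\otimes\MH_j^1, \qquad i\neq j.
\]

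The crux is a dichotomy on the pairs $(\alpha_i,x_i)$. If some pair $(\alpha_{i_0},x_{i_0})$ is linearly independent in $\MH_{i_0}^1$, then for every $j\neq i_0$ expanding the cross-term equation in a basis of $\MH_{i_0}^1$ containing $\alpha_{i_0}$ and $x_{i_0}$ forces $x_j=0$ and $\alpha_j=0$; hence $x,\alpha,\bx\in\MH_{i_0}^1$ and $x\in S(\MH_{i_0})$. Otherwise every pair $(\alpha_i,x_i)$ is linearly dependent, and setting $I:=\{i:x_i\neq 0\}$ we may write $\alpha_i=\epsilon_i x_i$ with $\epsilon_i\in\Z/2$ for each $i\in I$. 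Picking any $i\in I$, the cross-term equation with $j\notin I$ becomes $\alpha_j x_i=0$, forcing $\alpha_j=0$; and for $i,j\in I$ the cross-term equation reduces to $(\epsilon_i+\epsilon_j)x_i x_j=0$, forcing all $\epsilon_i$ to coincide. This leaves only $\alpha=0$ or $\alpha=x$, both contradicting $x\in S(\MH)$, so this branch is vacuous. The main (mild) obstacle is precisely this linearly-dependent branch, where one must carefully separate indices in $I$ from those in its complement in order to extract all the constraints from the cross-term system; the linearly-independent branch is by contrast an essentially one-step deduction.
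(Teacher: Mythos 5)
Your proof is correct and rests on the same core mechanism as the paper's: decompose $\MH^2$ into the diagonal parts $\MH_i^2$ and the cross-terms $\MH_i^1\otimes\MH_j^1$, and read off the resulting system of equations from $x\bx=0$. The difference is in bookkeeping and in how the case analysis is cut. The paper works directly with the components $y_i$ of $x$ and $z_i$ of $\bx$, and its dichotomy is on whether there exist $i\neq j$ with $y_i\neq 0$ and $z_j\neq 0$; in that case it reads off $y_i=z_i$ and $y_j=z_j$ from the single cross-term, propagates the equality to all components, and derives $x=\bx$, a contradiction. (The propagation step is left implicit in the paper but is immediate.) You instead pass to $\alpha=x+\bx$ and split on whether some pair $(\alpha_{i},x_{i})$ is linearly independent in $\MH_i^1$; the independent branch kills all other components in one stroke, while the dependent branch forces $\alpha=\epsilon x$ with a uniform $\epsilon\in\Z/2$, contradicting $x\neq\bx\neq 0$. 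Your version is a bit longer but makes every forcing step explicit and avoids the slight terseness in the paper's ``this shows $x=\bx$''; the paper's version is slicker in that the contradiction is reached from a single offending cross-term. Either way the conclusion $S(\MH)=\coprod_i S(\MH_i)$, and hence the semisimplicity statement, follows exactly as you say.
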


\begin{proof}
Let $x\in S(\MH)$ and write 
$x=\sum_{i=1}^r y_i$ and $\bx= \sum_{i=1}^r z_i$ with $y_i,z_i\in \MH_i$. 
Since $x\bx=0$, we have 
\[
\text{$y_iz_j+y_jz_i=0$ for all $i\not=j$. }
\]
Suppose that $y_i\not=0$ and $z_j\not=0$ for some $i\not=j$. 
Then $y_i=z_i$ and $y_j=z_j$ to satisfy the equations above.  
This shows that 
$x=\bx$, which contradicts the fact that 
$x\in S(\MH)$. Therefore $x=y_i$ and $\bx=z_i$ for some $i$, proving the 
lemma. 
\end{proof}

Recall that a Bott ring $\MH$ has a decomposition 
$\Lambda(V)\otimes \MH_S$ in \eqref{MH0}.  

\begin{coro} \label{semis}
If $\MH$ has another 
decomposition $\Lambda(U)\otimes \MS$ where 
$U$ is a subspace of $N(\MH)$ and $\MS$ is a semisimple subring of $\MH$, 
then $\dim U=\dim V$ and $\MS=\MH_S$.
\end{coro}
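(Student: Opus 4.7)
The plan is to apply Lemma~\ref{YMA} to the hypothetical decomposition $\MH = \L(U)\otimes\MS$ and exploit the fact that an exterior algebra over $\Z/2$ contributes no element to $S$. Noting that the zero matrix in $\T(\dim U)$ produces the Bott ring $\L(U)$ (the cohomology of $(\R P^1)^{\dim U}$), both factors $\L(U)$ and $\MS$ are Bott subrings of $\MH$, so Lemma~\ref{YMA} applies and gives
\[
S(\MH) = S(\L(U))\sqcup S(\MS).
\]

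The key step is the verification that $S(\L(U)) = \emptyset$. For any pair of nonzero elements $x,\bx\in \L(U)^1 = U$, the exterior product $x\bx$ vanishes exactly when $x$ and $\bx$ are linearly dependent over $\Z/2$; since the only nonzero scalar is $1$, this forces $\bx\in\{0,x\}$, contradicting both $\bx\ne 0$ and $\bx\ne x$ from the definition of $S(\MH)$. Thus $S(\L(U))$ is empty and $S(\MH) = S(\MS)$.

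From here the conclusion is formal. By definition $\MH_S$ is the graded subring of $\MH$ generated by $S(\MH)$, while semisimplicity of $\MS$ says that $\MS$ is generated by $S(\MS) = S(\MH)$, hence $\MS = \MH_S$. Comparing total $\Z/2$-dimensions in the two decompositions then yields $2^{\dim U}\dim\MH_S = \dim\MH = 2^{\dim V}\dim\MH_S$, which forces $\dim U = \dim V$. The only nontrivial obstacle is the exterior-algebra computation in the second paragraph; once it is in hand the corollary reduces to assembling Lemmas~\ref{YMA} and~\ref{MHS} with the definition of semisimplicity.
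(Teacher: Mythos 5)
Your proof is correct and takes essentially the same route as the paper's, which applies Lemma~\ref{YMA} to conclude $S(\MH)=S(\MS)$ from the emptiness of $S(\Lambda(U))$ and $S(\Lambda(V))$. You simply spell out the two steps the paper leaves implicit: the elementary check that $S$ of an exterior algebra over $\Z/2$ is empty, and the dimension count giving $\dim U=\dim V$.
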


\begin{proof}
Since both $S(\Lambda(U))$ abd $S(\Lambda(V))$ are empty, 
$S(\MH)=S(\MS)$ by Lemma~\ref{YMA} and this implies the corollary.
\end{proof}

\begin{lemm}  \label{factor}
Let $\MH=\bigotimes_{i=1}^r\MH_i$ be as in Lemma~\ref{YMA} and $\pi_i\colon 
\MH\to \MH_i$ be the projection. Let $\ML$ be a semisimple Bott ring and let 
$\psi\colon \ML\to \MH$ be a graded ring monomorphism.  If the composition 
$\pi_i\circ \psi\colon \ML\to \MH_i$ is an isomorphism for some $i$, then 
$\psi(\ML)=\MH_i$.
\end{lemm}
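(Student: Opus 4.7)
The plan is to exploit the decomposition $S(\MH)=\coprod_{j=1}^rS(\MH_j)$ from Lemma~\ref{YMA}, together with the defining property of semisimplicity: $\ML$ is generated as a graded ring by $S(\ML)$.

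First I would check that $\psi$ sends $S(\ML)$ into $S(\MH)$. Given $x\in S(\ML)$ with a witness $\bar x\in\ML^1\setminus\{0\}$ satisfying $x\bar x=0$ and $\bar x\neq x$, the injectivity of $\psi$ guarantees that $\psi(x)$ and $\psi(\bar x)$ are both nonzero and distinct while $\psi(x)\psi(\bar x)=0$, so $\psi(x)\in S(\MH)$. By Lemma~\ref{YMA} there is therefore a unique index $j(x)\in\{1,\dots,r\}$ with $\psi(x)\in S(\MH_{j(x)})\subseteq\MH_{j(x)}^1$, and in particular $\pi_k(\psi(x))=0$ for every $k\neq j(x)$.

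Next I would pin down $j(x)$. If $j(x)\neq i$ then $\pi_i(\psi(x))=0$, contradicting the injectivity of $\pi_i\circ\psi$ together with $x\neq 0$. Hence $\psi(S(\ML))\subseteq\MH_i^1$, and because $\ML$ is semisimple it is generated as a graded ring by $S(\ML)$, which yields $\psi(\ML)\subseteq\MH_i$. Finally, since the natural projection $\pi_i$ restricts to the identity on the canonical copy of $\MH_i$ inside $\MH$, the composition $\pi_i\circ\psi$ coincides with $\psi$ viewed as a map $\ML\to\MH_i$, so the surjectivity of $\pi_i\circ\psi$ forces $\psi(\ML)=\MH_i$.

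The argument is short, and its principal pillar is the disjointness clause of Lemma~\ref{YMA}: it is what prevents $\psi(x)$ from having components in several tensor factors simultaneously, and thus allows the hypothesis on $\pi_i\circ\psi$ to pin the whole image $\psi(S(\ML))$ into the single factor $\MH_i$. The remaining pieces---injectivity of $\psi$, bijectivity of $\pi_i\circ\psi$, and the fact that a semisimple Bott ring is ring-theoretically generated by $S(\ML)$---fit together routinely.
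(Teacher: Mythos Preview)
Your argument is correct and follows essentially the same route as the paper: both use that $\psi$ carries $S(\ML)$ into $S(\MH)$, invoke the disjoint decomposition $S(\MH)=\coprod_j S(\MH_j)$ from Lemma~\ref{YMA} together with injectivity of $\pi_i\circ\psi$ to force $\psi(S(\ML))\subseteq\MH_i$, and then use semisimplicity to conclude. The only cosmetic difference is in the last step, where the paper observes that $\psi(S(\ML))=S(\MH_i)$ (since $\pi_i\circ\psi$ is an isomorphism) and then appeals to semisimplicity of both $\ML$ and $\MH_i$, whereas you go via $\psi(\ML)\subseteq\MH_i$ and surjectivity of $\pi_i\circ\psi$; both finish the job equally well.
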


\begin{proof}
Let $y\in S(\ML)$. Then $\psi(y)\in S(\MH)$ because $\psi$ is a graded ring 
monomorphism, and it is actually in $S(\MH_i)$ by Lemma~\ref{YMA} since 
$(\pi_i\circ \psi)(y)\not=0$. This shows that $\psi(S(\ML))\subset S(\MH_i)$ 
but since $\pi_i\circ\psi$ is an isomorphism, the inclusion should be the equality. 
Therefore $\psi(\ML)=\MH_i$ because $\ML$ and $\MH_i$ are both 
semisimple. 
\end{proof} 

We say that a semisimple Bott ring is \emph{simple} if it is not 
isomorphic to the tensor product (over $\Z/2)$ of more than one semisimple 
Bott rings, 
in other words, a simple Bott ring is a Bott ring 
isomorphic to the cohomology ring 
(with $\Z/2$ coefficient) of an indecomposable real Bott manifold different 
from $S^1$.  A Bott ring isomorphic to the cohomology ring of the Klein 
bottle with $\Z/2$ coefficient is simple and we call it especially a 
\emph{Klein ring}.  
If an element $x\in S(\MH)$ satisfies $(x+\bx)^2=0$, then 
the subring generated by $x$ and $\bx$ is a Klein ring and we call 
such a pair $\{x,\bx\}$ a \emph{Klein pair}.  We note that $x$ and $\bx$ have 
the same eigen-element and $\{x,\bx\}$ is 
a Klein pair if and only if the eigen-element of $x$ and $\bx$, that is 
$x+\bx$, lies in $N(\MH)$. 

\begin{lemm} \label{klein}
If $S(\MH)\not=\emptyset$, then a Klein pair exists in $\MH$ and 
the quotient of $\MH$ by the ideal generated by a Klein pair 
is again a Bott ring. 
\end{lemm}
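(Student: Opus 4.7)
\emph{Proof proposal.} The plan is to exhibit a Klein pair directly using the upper-triangular structure of $A$, and then to identify the quotient as a Bott ring attached to an explicit $(n-2)\times(n-2)$ submatrix extracted from $A$.

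Since $S(\MH)\neq\emptyset$ the set $J:=\{j : \alpha_j\neq 0\}$ is non-empty. Let $j_0:=\min J$ and $I:=\{i : A^i_{j_0}=1\}$. Upper triangularity of $A$ gives $I\subseteq\{1,\dots,j_0-1\}$, and by the minimality of $j_0$ every $i\in I$ satisfies $\alpha_i=0$, hence $x_i^2=x_i\alpha_i=0$. Working over $\Z/2$,
\[
\alpha_{j_0}^2=\Bigl(\sum_{i\in I}x_i\Bigr)^2=\sum_{i\in I}x_i^2=0.
\]
Set $\bx:=x_{j_0}+\alpha_{j_0}$. Then $x_{j_0}\bx=x_{j_0}^2+x_{j_0}\alpha_{j_0}=0$, while $\alpha_{j_0}\neq 0$ together with the linear independence of $x_1,\dots,x_n$ in $\MH^1$ ensures $\bx\neq 0$ and $\bx\neq x_{j_0}$. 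Thus $x_{j_0}\in S(\MH)$, and $(x_{j_0}+\bx)^2=\alpha_{j_0}^2=0$ certifies $\{x_{j_0},\bx\}$ as a Klein pair.

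For the quotient, observe that the ideal generated by this Klein pair is $K=(x_{j_0},\alpha_{j_0})$. I would choose $i_0:=\max I$. In $\MH/K$ the relation $\alpha_{j_0}=0$ yields $x_{i_0}=\sum_{i\in I\setminus\{i_0\}}x_i$, and $x_{j_0}=0$ eliminates $x_{j_0}$. Substituting into $x_j^2=x_j\alpha_j$ for $j\neq j_0,i_0$ produces $x_j^2=x_j\sum_{i\neq j_0,i_0}B^i_j x_i$, where
\[
B^i_j=\begin{cases} A^i_j+A^{i_0}_j & \text{if } i\in I\setminus\{i_0\},\\ A^i_j & \text{otherwise.}\end{cases}
\]
The remaining original relations for $j=j_0$ and $j=i_0$ are trivially satisfied in the quotient (using $\alpha_{i_0}=0$ and $x_{j_0}=0$). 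Thus $\MH/K$ has the presentation of a Bott ring $\MH(B)$ on the index set $\{1,\dots,n\}\setminus\{j_0,i_0\}$, provided $B\in\T(n-2)$.

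The hardest step, and the reason for choosing $i_0=\max I$, is the verification that $B\in\T(n-2)$. For $i\notin I\setminus\{i_0\}$ the entries $B^i_j=A^i_j$ inherit the required vanishing from $A\in\T(n)$. For $i\in I\setminus\{i_0\}$ with $i\geq j$, maximality of $i_0$ forces $i<i_0$, so $j\leq i<i_0$; upper triangularity of $A$ then gives both $A^i_j=0$ and $A^{i_0}_j=0$, hence $B^i_j=0$ (this also handles the diagonal $j=i$). Consequently $B\in\T(n-2)$ and $\MH/K\cong\MH(B)$ is a Bott ring. A different choice of $i_0$ could produce a matrix outside $\T(n-2)$; one could always recover the Bott-ring form via a permutation (Op1), but the maximal choice sidesteps this extra step.
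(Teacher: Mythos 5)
Your construction of a Klein pair is correct and a bit more direct than the paper's (you pick $j_0=\min\{j:\alpha_j\neq 0\}$ and observe $\alpha_{j_0}^2=0$ immediately from upper triangularity, instead of first normalizing the matrix via (Op1)), and your computation showing that $\MH/(x_{j_0},\alpha_{j_0})$ is presented by a matrix $B\in\T(n-2)$, with the careful choice $i_0=\max I$ to keep the resulting matrix upper triangular, is also correct.

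However, there is a genuine gap: you only establish the Bott-ring property of the quotient for the \emph{one} Klein pair $\{x_{j_0},\bx_{j_0}\}$ that you constructed. The lemma as used in the proof of Proposition~\ref{simpl} needs the quotient by an \emph{arbitrary} Klein pair to be a Bott ring: there one quotients $\MD_q$ by $(\vf(x),\vf(\bx))$, the image under a ring isomorphism of a Klein pair, and this image can be any Klein pair in $\MD_q$ --- its eigen-element need not be $\alpha_{j_0}$, and the element $x$ itself need not be a canonical generator but can be any sum $\sum_{k:\alpha_k=\alpha}x_k\,(+\,\alpha)$ in the eigen-space. The paper's second paragraph handles this by first applying (Op1) to place the nonzero eigen-element $\alpha$ at position $\ell+1$, then (Op3) on the eigen-space of $\alpha_{\ell+1}$ to make $x=x_{\ell+1}$, and then (Op3) on $N(\MH)$ to make $\alpha_{\ell+1}=x_\ell$; after these normalizations the quotient visibly deletes the $\ell$-th and $(\ell+1)$-st rows and columns. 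To repair your argument you would need either to run your direct computation for a general Klein pair (which requires expressing $x$ as a possibly longer sum and choosing maxima in two index sets), or to invoke operations (Op1) and (Op3), as the paper does, to reduce an arbitrary Klein pair to the normalized form you treated.
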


\begin{proof}

Let $\MH$ be of the form \eqref{MH}.  
The assumption $S(\MH)\not=\emptyset$ is equivalent to $A$ being non-zero 
as remarked before.  As in the proof of Lemm~\ref{MHS}, 
we may assume through the operation (Op1) that the first $\ell$ columns 
of $A$ are zero and none of the remaining columns is zero. Then $x_1,\dots,x_{\ell}$ 
are elements of $N(\MH)$ and the eigen-element $\alpha_{\ell+1}$ 
of $x_{\ell+1}$ is a linear combination of $x_1,\dots,x_{\ell}$, so $\alpha_{\ell+1}$ 
lies in $N(\MH)$ which means that $\{x_{\ell+1},\bx_{\ell+1}\}$ is a Klein pair. 

If $\{x,\bx\}$ is a Klein pair, then the eigen-element of $x$ is non-zero and belongs 
to $N(\MH)$, so through the operation (Op1) we may assume that it is 
$\alpha_{\ell+1}$.  Then, applying the operation (Op3) to the eigen-space of 
$\alpha_{\ell+1}$, we may assume $x=x_{\ell+1}$.  
We further may assume $\alpha_{\ell+1}=x_\ell$ by applying the operation (Op3) 
to $N(\MH)$.  The quotient 
ring of $\MH$ by the ideal generated by the Klein pair $\{x,\bx\}$ is then nothing 
but to take $x_\ell=x_{\ell+1}=0$ in $\MH$, so it is a Bott 
ring associated with a $(n-2)\times(n-2)$ matrix obtained from $A$ by 
deleting $\ell$-th and $\ell+1$-st columns and rows.  
\end{proof}

Now we are in a position to prove the unique decomposition of a semisimple 
Bott ring into a tensor product of simple Bott rings.

\begin{prop} \label{simpl}
Let ${\MC}_i$ $(i=1,\dots,p)$ and ${\MD}_j$ $(j=1,\dots,q)$ be 
simple Bott rings. If there exists a graded ring isomorphism 
\begin{equation} \label{MA}
\vf\colon \bigotimes_{i=1}^p\MC_i\to \bigotimes_{j=1}^q \MD_j,
\end{equation}
then $p=q$ and $\vf$ preserves the factors, i.e. 
there is a permutation $\rho$ on 
$\{1,\dots,p=q\}$ such that $\vf(\MC_i)=\MD_{\rho(i)}$ for $i=1,\dots,p$. 
\end{prop}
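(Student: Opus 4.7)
I would prove Proposition~\ref{simpl} by induction on $p$. The heart of the inductive step is to show that $\vf$ sends $\MC_1$ isomorphically onto some $\MD_{j_0}$; once this is established, Lemma~\ref{factor} applied to the composition $\vf\circ\iota_1$ (where $\iota_1\colon \MC_1 \hookrightarrow \bigotimes_i \MC_i$ is the natural inclusion) formalizes $\vf(\MC_1) = \MD_{j_0}$, and cancellation by the augmentation ideals yields an isomorphism $\bigotimes_{i \geq 2}\MC_i \cong \bigotimes_{j \neq j_0}\MD_j$ to which the inductive hypothesis applies, producing the permutation $\rho$ on the remaining indices. The base case $p = 1$ is itself the statement that a simple Bott ring cannot decompose nontrivially, and it follows from the same argument used in the inductive step.

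To locate $j_0$, I would apply Lemma~\ref{YMA} on both sides to obtain the decompositions $S\bigl(\bigotimes_i \MC_i\bigr) = \coprod_i S(\MC_i)$ and analogously for the $\MD$'s, so that $\vf$ induces a bijection of these disjoint unions. For each $x \in S(\MC_1)$, let $j(x)$ denote the unique index with $\vf(x) \in S(\MD_{j(x)})$, partition $S(\MC_1) = \coprod_j S_j$ by the value of $j(\cdot)$, and let $R_j \subseteq \MD_j$ be the subring generated by $\vf(S_j)$. Since $\MC_1$ is semisimple, $\vf(\MC_1)$ is generated by $\coprod_j \vf(S_j)$; because the various $\vf(S_j)$'s lie in pairwise independent tensor factors of $\bigotimes_j \MD_j$, the subring they jointly generate is precisely $\bigotimes_j R_j$. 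Hence $\MC_1 \cong \bigotimes_j R_j$ as graded rings, and simplicity of $\MC_1$ forces all but one $R_j$ to be trivial, giving the desired $j_0$. Applying the symmetric argument to $\vf^{-1}$ and the simple ring $\MD_{j_0}$ yields the reverse inclusion $\vf^{-1}(S(\MD_{j_0})) \subseteq S(\MC_1)$, and Lemma~\ref{factor} then completes the identification $\vf(\MC_1) = \MD_{j_0}$.

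The main obstacle is to confirm that each $R_j$ is genuinely a Bott ring, so that the decomposition $\MC_1 \cong \bigotimes_j R_j$ truly contradicts simplicity as formally defined. I would address this by observing that $\vf(S_j)$ is closed under the S-partner map $x \mapsto \bar x$, which is immediate from the proof of Lemma~\ref{YMA} (S-partners lie in the same tensor factor), and that the eigen-element of each $\vf(x) \in \vf(S_j)$ lies in $R_j$ by Lemma~\ref{EAa}. With these closure properties in hand, Lemma~\ref{MHS}, applied after normalizing the generators of $R_j$ via the three operations (Op1), (Op2), (Op3), should exhibit $R_j$ as a Bott ring. The cancellation step is then routine: $\vf(\MC_1^+) = \MD_{j_0}^+$ implies that $\vf$ descends to the desired graded ring isomorphism $\bigotimes_{i \geq 2}\MC_i \cong \bigotimes_{j \neq j_0}\MD_j$, after which the induction concludes the proof.
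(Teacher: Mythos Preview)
Your strategy differs from the paper's. The paper inducts on $\rank\MC=\dim\MC^1$ and never attempts to show directly that $\vf(\MC_1)$ lands in a single factor. Instead it picks a Klein pair $\{x,\bx\}$ in some $\MC_i$ (Lemma~\ref{klein}), observes via Lemma~\ref{YMA} that $\vf$ carries it into a single $\MD_j$, and passes to the quotient by the ideal $(x,\bx)$ on the left and $(\vf(x),\vf(\bx))$ on the right. Lemma~\ref{klein} guarantees these quotients are again Bott rings, so the induction hypothesis applies to the induced isomorphism $\bar\vf$; one then recovers $\vf(\MC_i)=\MD_j$ for the original $\vf$ via Lemma~\ref{factor}. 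Two cases (some factor is a Klein ring, or none is) are handled separately.

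Your approach is more direct and appealing, but the point you flag as ``the main obstacle'' is a genuine gap, and your sketch does not close it. Lemma~\ref{MHS} concerns the subring generated by the \emph{entire} set $S(\MH)$, not an arbitrary $S$-partner--closed subset, and closure under $x\mapsto\bx$ together with containment of eigen-elements is not enough to force $R_j$ to be a Bott ring. For instance, in the rank-$4$ Bott ring with relations $x_1^2=0$, $x_2^2=x_1x_2$, $x_3^2=x_2x_3$, $x_4^2=x_3x_4$, the set $T=\{x_4,\,x_3+x_4\}$ is closed under the partner map and its span contains the eigen-element $x_3$, yet the subring it generates has no nonzero degree-one element of square zero, so it is not a Bott ring. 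Of course this particular $T$ cannot arise as $\vf(S_j)$ (one checks that $x_3$ itself would then be forced into $T$), but that only shows your $R_j$ enjoys further constraints you have not isolated---roughly, that $\vf(S_j)$ must be closed under iterated passage to eigen-elements, not just partners. Turning that into a proof that $R_j$ is a Bott ring (so that the decomposition $\MC_1\cong\bigotimes_j R_j$ actually contradicts simplicity as defined) requires substantially more than invoking Lemma~\ref{MHS} ``after normalization''. The paper sidesteps this entirely by working with quotients by Klein pairs, where Lemma~\ref{klein} supplies the needed Bott-ring structure for free.
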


\begin{proof}
We set $\MC=\bigotimes_{i=1}^p\MC_i$ and $\MD=\bigotimes_{j=1}^q\MD_j$. 
If either $\MC$ or $\MD$ is simple (i.e. $p=1$ or $q=1$), 
then both of them must be simple and the proposition is trivial. 
In the sequel we will assume that both $\MC$ and $\MD$ are not simple 
(so that $p\ge 2$ and $q\ge 2$), and 
prove the proposition by induction on the rank of $\MC$, that is, 
$\dim \MC^1$. 

If $\vf(\MC_i)=\MD_j$ for some $i$ and $j$, say $\vf(\MC_p)=\MD_q$, 
then we factorize them so that $\vf$ induces an isomorphism 
$\bar \vf\colon \bigotimes_{i=1}^{p-1}\MC_i\to \bigotimes_{j=1}^{q-1} \MD_j$.  
By the induction assumption, we conclude $p=q$ and may assume that 
$\bar \vf(\MC_i)=\MD_i$ for $i=1,\dots,p-1$ if necessary by permuting the suffixes 
of $\MD_j$'s. Then it follows from Lemma~\ref{factor} that 
$\vf(\MC_i)=\MD_{i}$ for $i=1,\dots,p-1$. 
This together with $\vf(\MC_p)=\MD_q$ where $p=q$ proves the statement in the lemma. 
In the sequel, it suffices 
to show that $\vf(\MC_i)=\MD_j$ for some $i$ and $j$ when we have 
an isomorphism $\vf$ in the proposition.

\emph{Case 1.}  The case where some $\MC_i$ or $\MD_j$ is a Klein ring. 
We may assume that $\MC_p$ is a Klein ring without loss of generality. 
Let $\{x,\bx\}$ be a Klein pair in $\MC_p$. Its image by $\vf$ 
sits in some $\MD_j$ by Lemma~\ref{YMA} and 
we may assume that it sits in $\MD_q$. If $\MD_q$ is also a Klein ring, 
then $\vf(\MC_p)=\MD_q$. 
Therefore we may assume that $\MD_q$ is not a Klein ring in the following. 

Our isomorphism $\vf$ induces an isomorphism 
\[
\bar \vf \colon \MC/(x,\bx)=\bigotimes_{i=1}^{p-1}\MC_i\cong 
\bigotimes_{j=1}^{q-1}\MD_j\otimes (\MD_q/(\vf(x),\vf(\bx)))
\]
where $(u,v)$ denotes the ideal generated by the elements $u$ and $v$ and 
$\MD_q/(\vf(x),\vf(\bx))$ is a Bott ring by Lemma~\ref{klein}. 
Since $\rank (\MC/(x,\bx))=\rank \MC-2$, it follows 
from the induction assumption that $p-1\ge q$ and we may assume that 
$\bar \vf(\MC_i)=\MD_i$ for $i=1,\dots,q-1$ and 
$\bar \vf(\otimes_{i=q}^{p-1}\MC_i)=\MD_q/(\vf(x),\vf(\bx))$, in particular, 
$\bar \vf(\MC_1)=\MD_1$ as $q\ge 2$.  
Then, it follows from Lemma~\ref{factor} that $\vf(\MC_1)=\MD_1$. 

\emph{Case 2.} The case where none of $\MC_i$'s and $\MD_j$'s is 
a Klein ring.  Let $\{x,\bx\}$ be a Klein pair of $\MC_p$ and we may 
assume that its image by $\vf$ sits in $\MD_q$ as before.  Then $\vf$ 
induces an isomorphism 
\[
\bar \vf\colon \MC/(x,\bx)=\bigotimes_{i=1}^{p-1}\MC_i\otimes (\MC_p/(x,\bx))
\to \bigotimes_{j=1}^{q-1}\MD_j\otimes (\MD_q/(\vf(x),\vf(\bx))),
\]
where the quotients $\MC_p/(x,\bx)$ and $\MD_q/(\vf(x),\vf(\bx))$ are both 
Bott rings by Lemma~\ref{klein}.  The induction assumption can be applied 
to this situation as before. 
If $\bar \vf(\MC_i)=\MD_j$ for some $1\le i\le p-1$ and $1\le j\le q-1$, then 
$\vf(\MC_i)=\MD_j$ by Lemma~\ref{factor}. 
If $\bar \vf(\bigotimes_{i=1}^{p-1}\MC_i)=\MD_q/(\vf(x),\vf(\bx))$ 
and $\bar \vf(\MC_p/(x,\bx))=\bigotimes_{j=1}^{q-1}\MD_j$,  then 
$\vf$ restricts to an isomorphism 
\[
(\bigotimes_{i=1}^{p-1}\MC_i)\otimes\langle x,\bx\rangle\to \MD_q
\]
where $\langle x,\bx\rangle$ denotes 
the Klein ring generated by $x$ and 
$\bx$, and this contradicts the fact that $\MD_q$ is simple as $p\ge 2$.  
\end{proof}

Now Theorem~\ref{bdeco} follows from Corollaries~\ref{maincoro}, 
\ref{semis} and Proposition~\ref{simpl}.

\bigskip

\noindent
{\bf Acknowledgment}. I would like to thank Y. Kamishima for 
%informing him of results by Charlap \cite{char65-1}. 
communications which stimulated this research and A. Nazra for 
informing me of his classification of real Bott manifolds of dimension 
$\le 5$ up to diffeomorphism.


\begin{thebibliography}{19}

\bibitem{char65-1}
L. S. Charlap,
\emph{Compact flat Riemannian manifolds: I},
Ann. of Math. 81 (1965), 15--30. 

\bibitem{gr-ka94}
M. Grossberg and Y. Karshon,
\emph{Bott towers, complete integrability, and the extended character of 
representations}, 
Duke Math. J~76 (1994), 23--58.

\bibitem{ka-ma08}
Y. Kamishima and M. Masuda,
\emph{Cohomological rigidity of real Bott manifolds},
preprint, arXiv:0807.4263. 

\bibitem{ka-na08}
Y. Kamishima and A. Nazra,
\emph{Seifert fibered structure and rigidity on real Bott towers}, 
in preparation. 

\bibitem{masu08}
M. Masuda, 
\emph{Cohomological non-rigidity of generalized real Bott manifolds 
of height 2},
preprint, arXiv:0809.2215.

\bibitem{ma-pa08}
M. Masuda and T. Panov,
\emph{Semifree circle actions, Bott towers, and quasitoric manifolds}, 
%Mat. Sbornik 199 (2008), no.8, 95--122 (in Russian), 
Sbornik Math. (to appear), arXiv:math.AT/0607094. 

%\bibitem{ma-su07}
%M. Masuda and D. Y. Suh, 
%\emph{Classification problems of toric manifolds via topology},
%Proc. of Toric Topology, Contemp. Math. 460 (2008), 273--286, 
%arXiv:0709.4579. 

\bibitem{nazr08}
A. Nazra,
\emph{Real Bott tower},
Tokyo Metropolitan University, Master Thesis 2008. 

%\bibitem{newm72}
%M. Newman,
%\emph{Integral Matrices},
%Pure and Applied Mathematices 45, Adacemic Press, 1972. 


\end{thebibliography}
\end{document}